\documentclass[11pt]{elsarticle}
\usepackage{amsmath,amssymb,amsthm}
\usepackage{hyperref}
\usepackage{mathrsfs}
\usepackage{graphicx}
\usepackage{tikz}
\usepackage{enumerate}
\usepackage{float}
\usepackage{url}
\usepackage{comment}

\theoremstyle{plain}
\newtheorem{theorem}{Theorem}[section]
\newtheorem{lemma}[theorem]{Lemma}

\newtheorem{proposition}[theorem]{Proposition}

\theoremstyle{definition}
\newtheorem{definition}[theorem]{Definition}
\newtheorem{remark}[theorem]{Remark}

\newcommand{\dda}{\mathord{\mbox{\makebox[0pt][l]{\raisebox{-.4ex}{$\downarrow$}}$\downarrow$}}}

\newcommand{\ua}{\mathord{\uparrow}}
\newcommand{\da}{\mathord{\downarrow}}
\newcommand{\rom}[1]{\rm{\uppercase\expandafter{\romannumeral #1}}}

\makeatletter
\def\ps@pprintTitle{%
  \let\@oddhead\@empty
  \let\@evenhead\@empty
  \def\@oddfoot{\reset@font\hfil\thepage\hfil}
  \let\@evenfoot\@oddfoot
}
\makeatother

\begin{document}

\begin{frontmatter}

\title{A Universal 	Quasi-Polish Space}

\author[1]{Su Gao}
\ead{sgao@nankai.edu.cn}
\address[1]{School of Mathematical Sciences and LPMC, Nankai University, Tianjin 300071, China}
\author[2]{Qingguo Li}
\ead{liqingguoli@aliyun.com}
\address[2]{School of Mathematics, Hunan University, Changsha, Hunan, 410082, China}
\author[3]{Hualin Miao\corref{a1}}
\ead{miaohualinmiao@163.com}
\address[3]{College of Mathematical Sciences, Harbin Engineering University, Harbin, Heilongjiang, 150001, China}
\cortext[a1]{Corresponding author.}
\begin{abstract}
In this paper, we first construct a universal quasi-Polish space $\mathbb{Y}$ with the property that every quasi-Polish space is homeomorphic to a closed subspace of $\mathbb{Y}$. This gives a hyperspace $F(\mathbb{Y})$ of all quasi-Polish spaces which is itself a quasi-Polish space. We then prove that the collection of all Polish closed subspaces of $\mathbb{Y}$ forms a coanalytic, non-Borel subset of $F(\mathbb{Y})$. In particular, this collection is ${\mathbf{\Pi}}^1_1$-complete.
\end{abstract}

\begin{keyword}
	Universal quasi-Polish spaces; Polish spaces; coanalytic sets; ${\mathbf{\Pi}}^1_1$-complete
	
	\MSC 03E15; 54H05; 54E50; 06B35
\end{keyword}

\end{frontmatter}
\section{Introduction}
Universal spaces play a fundamental role in topology and descriptive set theory. A universal space provides a single ambient space in which every member of a given class can be represented as a subspace of a prescribed type. Such representations not only simplify the study of the class itself, but also make it possible to investigate the descriptive complexity of collections of subspaces through the associated hyperspaces.

For Polish spaces, universal representations have been extensively studied. It is well-known that every Polish space is homeomorphic to a closed subspace of the countable power $\mathbb{R}^{\omega}$ (see, e.g. \cite[Theorem 4.17]{Kechris}), and on the other hand, every Polish space can also be embedded as a $G_{\delta}$-subspace of the Hilbert cube $[0,1]^{\omega}$ (see \cite[Theorem 4.14]{Kechris}). Since $\mathbb{R}^{\omega}$ is known to be homeomorphic to the Hilbert space $\ell_2$ (see \cite[Theorem VI.2.1]{BessagaPelczynski}) and the universal Urysohn metric space $\mathbb{U}$ \cite{Uspenskij}, each of these other classical spaces also represent all Polish spaces as closed subspaces. They have become indispensable tools in descriptive set theory and infinite-dimensional topology.

Quasi-Polish spaces, introduced by de Brecht \cite{B}, form a broad class of countably based spaces that extends the class of Polish spaces while retaining many desirable descriptive-set-theoretic properties. They provide a natural topological framework for domain theory, computable analysis, and generalized descriptive set theory, and have become an important setting in which classical results on Polish spaces can be extended beyond the Hausdorff context.

The study of universality is considerably less complete for classes of quasi-Polish spaces. De Brecht \cite{B} proved that every quasi-Polish space is homeomorphic to a $\mathbf{\Pi}^0_2$-subspace of the countable power $\mathbb{S}^{\omega}$ of the Sierpiński space $\mathbb{S}$, thereby providing a universal representation analogous to the $G_{\delta}$-representation of Polish spaces. However, a universal quasi-Polish space in which every quasi-Polish space can be realized as a closed subspace was not available.

The difficulty stems from the fact that quasi-Polish spaces are substantially more general than Polish spaces. In particular, quasi-Polish spaces need not be metrizable and are generally non-Hausdorff. Consequently, many techniques used in the construction of universal Polish spaces, which rely heavily on metric or Hausdorff properties, cannot be directly adapted to the quasi-Polish setting. Constructing a universal quasi-Polish space with respect to closed embeddings 
therefore requires new ideas.

The first objective of this paper is to fill this gap. We construct a universal quasi-Polish space $\mathbb{Y}$ with the property that every quasi-Polish space is homeomorphic to a closed subspace of $\mathbb{Y}$. This has a nice consequence that the hyperspace $F(\mathbb{Y})$ of all closed subsets of $\mathbb{Y}$, equipped with the lower Vietoris topology, is still a quasi-Polish space, and now it becomes the quasi-Polish hyperspace of all quasi-Polish spaces. In contrast, no such canonical Polish hyperspaces of all Polish spaces are known. For example, $F(\mathbb{R}^\omega)$, equipped with the Effros Borel structure, is only a standard Borel space and, while it carries many Polish topologies compatible with its Borel structure, none of these Polish topologies are canonical. 

Once the universal quasi-Polish space $\mathbb{Y}$ has been established, it is natural to investigate the descriptive complexity of distinguished classes of closed subspaces of $\mathbb{Y}$ as subsets of $F(\mathbb{Y})$. In particular, one may ask how complicated it is to recognize all closed subspaces of $\mathbb{Y}$ that are Polish. Answering this question, we show that the set $\{F\in F(\mathbb{Y})\colon \mbox{$F$ is Polish}\}$ is a coanalytic non-Borel subset of $F(\mathbb{Y})$. More precisely, we prove that it is ${\mathbf{\Pi}}^1_1$-complete.

In summary, we provide a closed-universal representation for all quasi-Polish spaces and determine the exact descriptive complexity of all Polish closed subspaces in our universal quasi-Polish space. These results suggest that the class of all quasi-Polish spaces is in some sense more canonical than the class of all Polish spaces.

The remainder of the paper is organized as follows. In Section 2, we recall the necessary background on quasi-Polish spaces. Section 3 is devoted to the construction of the universal quasi-Polish space $\mathbb{Y}$. In Section 4, we establish the descriptive complexity of the family of Polish closed subspaces and prove its ${\mathbf{\Pi}}^1_1$-completeness.

\section{Preliminaries}
In this section we review the basic concepts and notation about quasi-Polish spaces~\cite{B, BKS}, descriptive set theory~\cite{Kechris}, and more generally non-Hausdorff topology~\cite{clad3, nht2}. These will be used throughout the paper.

\begin{definition}
	A \emph{quasi-metric} on a set $X$ is a function $d : X \times  X \rightarrow  [0,\infty )$ such that for all $x, y, z \in  X$:
	\begin{enumerate}
	\vspace{-6pt}
	\item $x = y\iff d(x, y) = d(y, x) = 0$,
	\vspace{-6pt}
	\item $d(x, z)\leq d(x, y) +d(y, z)$.
	\end{enumerate}
\vspace{-6pt}A \emph{quasi-metric space} is a pair $(X,d)$ where $d$ is a quasi-metric on $X$.
\end{definition}
A quasi-metric $d$ on $X$ induces a $T_{0}$ topology $\tau _d$ on $X$ generated by basic open balls of the form $B_{d}(x, \varepsilon) =\{y \in X : d(x, y) < \varepsilon \}$ for $x \in X$ and a real number $\varepsilon  > 0$. We let $\bar{B}_{d}(x, \varepsilon) =\{y \in X : d(x, y) \leq \varepsilon \}$.

If $(X,d)$ is a quasi-metric space, then $(X,\hat{d})$ is a metric space, where $\hat{d}$ is defined as $\hat{d}(x,y) = \max\{d(x,y),d(y,x)\}$. The metric topology induced by $\hat{d}$ will be denoted by $\tau_{\hat{d}}$.

A sequence $(x_n)_{n\in \omega}$ in a quasi-metric space $(X,d)$ is \emph{Cauchy} if for each real
number $ \varepsilon  > 0$, there exists $n_0 \in \omega $ such that $d(x_n, x_m) < \varepsilon $ for all $m\geq n\geq n_0$. $(X,d)$ is a \emph{complete} quasi-metric space if every Cauchy sequence in $X$ converges with respect to the metric topology $\tau_{\hat{d}}$.

A topological space is \emph{countably based} (or \emph{second countable}) if its topology admits a countable base. A topological space is \emph{completely metrizable} if its topology is generated by a compatible complete metric. \emph{Completely quasi-metrizable} spaces are similarly defined.

\begin{definition}
	A topological space is \emph{Polish} if it is countably based and completely metrizable; it is \emph{quasi-Polish} if it is countably based and completely quasi-metrizable.
\end{definition}

\begin{definition}\cite{B}
Let $X$ be a topological space. For each ordinal $\alpha$ $(1\leq \alpha < \omega_{1} )$, we define $\mathbf{\Sigma}^{0}_{\alpha} (X)$ inductively as follows.
\begin{enumerate}
\item[(1)] $\mathbf{\Sigma}^{0}_{1}(X)$ is the set of all open subsets of $X$.
\item[(2)] For $\alpha > 1, \mathbf{\Sigma}^{0}_{\alpha} (X)$ is the set of all subsets $A$ of $X$ which can be expressed in the form
\begin{center}
	$A=\bigcup_{i\in \mathbb{N}}B_{i}\backslash B_{i}^{'}$,
\end{center}
where for each $i$, $B_{i}$ and $B_{i}^{'}$ are in $\mathbf{\Sigma}_{\beta_{i}}^{0}$ for some $\beta_{i}<\alpha$.
\end{enumerate}
We define $\mathbf{\Pi}^{0}_{\alpha}(X)=\{X \backslash A: A \in \mathbf{\Sigma}^{0}_{\alpha}(X)\}$ and $\mathbf{\Delta}^{0}_{\alpha}(X)=\mathbf{\Sigma}^{0 }_{\alpha} (X)\cap \mathbf{\Pi}^{ 0}_{\alpha} (X)$.
\end{definition}

Note that for Polish spaces, the above definition coincides with the classical definition of the Borel hierarchy~\cite[Propositions 6, 7]{B}. 

Let $\mathbb{S}$ be the \emph{Sierpi\'{n}ski space} on the set $\{0,1\}$ with the topology $\big\{\varnothing, \{1\}, \{0,1\}\big\}$. The product space $\mathbb{S}^\omega$ is quasi-Polish and has the universality property that a topological space is quasi-Polish if and only if it is homeomorphic to a $\mathbf{\Pi}^0_2$-subspace of $\mathbb{S}^\omega$ \cite[Theorem 24]{B}.

\begin{definition}
A \emph{Borel space} is a pair $(X,\mathcal{S})$, where $X$ is a set and $\mathcal{S}$ is 
a $\sigma$-algebra on $X$. The members of $\mathcal{S}$ are called \emph{Borel sets}. A Borel space $(X,\mathcal{S})$ is \emph{standard} if there is a Polish topology $\tau$ on $X$ with $\mathcal{S} = \mathbf{B}(\tau)$, where $\mathbf{B}(\tau)$ denotes the smallest $\sigma$-algebra generated by $\tau$.
\end{definition}

If $X$ and $Y$ are standard Borel spaces, then a function $f\colon X\to Y$ is \emph{Borel} if for every Borel subset $B\subseteq Y$, $f^{-1}(B)$ is a Borel subset of $A$. $X$ and $Y$ are \emph{Borel isomorphic} if there is a Borel bijection between them. It is well-known that all uncountable standard Borel spaces are Borel isomorphic (see, e.g., \cite[Theorem 15.6]{Kechris}). Thus there is essentially only one uncountable standard Borel space. If $\tau$ is a quasi-Polish topology on $X$, then $(X, \mathbf{B}(\tau))$ is also a standard Borel space~\cite[Theorem 14]{B}. 

\begin{definition}
If $X$ is a standard Borel space and $A\subseteq X$, we say that $A$ is \emph{analytic} if there is a standard Borel space $Y$ and a Borel function $f\colon Y\to X$ such that $f(Y)=A$. We say that $A$ is \emph{coanalytic} if its complement $A^c$ is analytic. We denote by $\mathbf{\Sigma}_1^1(X)$ the class of all analytic subsets of X, and ${\mathbf{\Pi}}^1_1(X)$ the class of all coanalytic subsets of $X$.
\end{definition}

If $X$ is a standard Borel space, then $\mathbf{\Sigma}^1_1(X)$ can be equivalently described as 
$$\mathbf{\Sigma}_1^1(X)=\big\{A \subseteq X : \mathrm{for ~some ~standard ~Borel ~space}~ Y \mathrm{and ~Borel} ~B \subseteq X \times Y, A=\mbox{proj}_X(B)\big\}.$$
By Suslin's theorem (see, e.g., \cite[Theorem 14.11]{Kechris}), a subset of a standard Borel space is Borel if and only if it is both analytic and coanalytic. Since every quasi-Polish space is also a standard Borel space, it make sense to consider analytic and coanalytic subsets of quasi-Polish spaces, and Suslin's theorem still holds for quasi-Polish spaces.

\begin{definition} 
Let $X,Y$ be standard Borel spaces and $A \subseteq X,B \subseteq Y$. A \emph{Borel reduction} of $A$ to $B$ is a Borel function $f:X \rightarrow Y$ with $f^{-1}(B) = A$, i.e., for any $x\in X$, $x \in A\iff f(x) \in B$. If $B$ is coanalytic, then we say that $B$ is \emph{$\mathbf{\Pi}^1_1$-complete} if for any coanalytic $A$, there is a Borel reduction of $A$ to $B$.
\end{definition}

It follows from a theorem of Suslin that a $\mathbf{\Pi}^1_1$-complete set in an uncountable standard Borel space is not Borel (see, e.g., \cite[Theorem 14.2]{Kechris}). A prototypical $\mathbf{\Pi}^1_1$-complete set is the set of all well-founded trees on $\omega$. We recall its definition below.

Let $\omega^{<\omega}$ be the set of all finite sequences on $\omega$. For $s\in\omega^{<\omega}$, let $|s|$ be the \emph{length} of $s$. The unique sequence of length $0$ is called the \emph{empty sequence} and is denoted $()$ or $\varnothing$. By viewing $s$ as a function from $\{0, 1,\dots,|s|-1\}$ to $\omega$, we can write $s$ as $(s(0), s(1),\dots, s(|s|-1))$. If $k\leq |s|$, then $s|_k$ is the sequence $(s(0), s(1), \dots, s(k-1))$. For each nonempty $s\in \omega^{<\omega}$, let $s^{-}=s|_{|s|-1}$ and call it the \emph{immediate predecessor} of $s$. If $s, t\in \omega^{<\omega}$, then we write $s\subseteq t$ if $s=t|_{|s|}$. In this case $s$ is called an \emph{initial segment} or a \emph{prefix} of $t$. A \emph{tree} on $\omega$ is a nonempty set $T\subseteq \omega^{<\omega}$ closed under taking initial segments. The set of all trees on $\omega$ is denoted $\mbox{TR}$. The standard topology of $\mbox{TR}$ is the subspace topology when it is viewed as a subset of $\{0,1\}^{\omega^{<\omega}}$, which is equipped the product topology, where $\{0,1\}$ is given the discrete topology. Under this standard topology, $\mbox{TR}$ is a Polish space.

For an infinite sequence $x\in \omega^\omega$ and $k\in\omega$, we also define $x|_k=(x(0), \dots, x(k-1))$, where $x$ is viewed as a function from $\omega$ to $\omega$. If $x\in \omega^\omega$ and $s\in \omega^{<\omega}$, then we write $s\subseteq x$ also when $s=x|_{|s|}$. In this case, $s$ is again called an \emph{initial segement} or a \emph{prefix} of $x$. If $T$ is a tree on $\omega$ and $x\in\omega^\omega$, then $x$ is a \emph{branch} of $T$ if for all $n\in\omega$, $x|_n\in T$. The set of all branches of $T$ is called the \emph{body} of $T$ and is denoted $[T]$. The tree $T$ is called \emph{well-founded} if $[T]=\varnothing$. The set of all well-founded trees on $\omega$ is denoted $\mbox{WF}$. $\mbox{WF}$ is a $\mathbf{\Pi}^1_1$-complete subset of $\mbox{TR}$.


In our proofs we will also use the following concepts and notation from domain theory~\cite{clad3, nht2}. 

Given a $T_{0}$ space $(X,\tau)$, for any subset $A\subseteq X$, we denote the closure of $A$ by $cl(A)$ and the complement of $A$ by $A^{c}$. We write $F(X)$ for the family of all closed subsets of $X$ and $\mathcal{O}(X)$ for the family of all open subsets of $X$.

Let $P$ be a poset, 
and $F\subseteq_{fin}P$ denote that $F$ is a finite subset of $P$. A subset $D$ of $P$ is \emph{directed} provided that it is nonempty and every finite subset of $D$ has an upper bound in $D$. 
A poset $P$ is a \emph{dcpo} (standing for \emph{directed complete poset}) if every directed subset $D$ has a supremum. 

Let $P$ be a poset and $x,y \in P$. Write $\ua x=\{z\in P\colon x\leq z\}$ and $\da x=\{z\in P\colon z\leq x\}$. We say that $x$ is \emph{way-below} $y$, denoted by $x \ll y$, if for any directed subset $D$ of $P$ for which $\sup D$ exists, $y \leq \sup D$ implies $D \cap \ua x \neq \varnothing$. The set of all elements that are way below $x$ is denoted $\dda x=\{y\in P\colon y\ll x\}$. We say that $P$ is \emph{continuous} if $\dda x$ is directed and $x=\sup \dda x$. A continuous dcpo is called a \emph{domain}. The element $x$ is called \emph{compact} iff $x\ll x$. The subset of all compact elements of $P$ is denoted $K(P)$. The poset $P$ is called \emph{algebraic} if for all $x\in P$, $\da x\cap K(P)$ is directed and $x=\sup(\da x\cap K(P))$. If $K(P)$ is a countable set, then $P$ is called \emph{$\omega$-algebraic} or \emph{countably based algebraic}.

 Let $\mathcal{P}(\omega)$ be the power set of $\omega$ ordered by subset inclusion. Then $\mathcal{P}(\omega)$ is an $\omega$-algebraic domain. 
 
Let $P$ be a poset. A subset $U$ of $P$ is \emph{Scott open} if 
\begin{enumerate}
\vspace{-6pt}\item[(i)] $U = \ua U = \{x \in P : u \leq x ~\mathrm{for ~some} ~u \in U\}$, and 
\vspace{-6pt}\item[(ii)] for every directed subset $D$ of $P$ with $\sup D$ existing and $\sup D\in U$, we have $D \cap U \neq \varnothing$.
\end{enumerate}
\vspace{-6pt}The complement of a Scott open set is called \emph{Scott closed}. Let $\sigma (P)$ denote the set of all \emph{Scott open} sets and $\Gamma(P)$ the set of all \emph{Scott closed} sets. The space $(P,\sigma(P))$ is called the \emph{Scott space} of $P$ and is written as $\Sigma P$. 

It is easy to see that the Sierpi\'{n}ski space $\mathbb{S}$ is just $\Sigma 2$, where $2=\{0,1\}$ is the natural poset with the order $0<1$. The Scott space of $\mathcal{P}(\omega)$ is homeomorphic to $\mathbb{S}^\omega=(\Sigma 2)^\omega$~\cite[Theorem I-2.11]{clad3}. It is customary to denote the Scott space of $\mathcal{P}(\omega)$ still as $\mathcal{P}(\omega)$ (instead of $\Sigma\mathcal{P}(\omega)$), so we will follow this convention. In other words, in this paper, $\mathcal{P}(\omega)$ is always equipped with the Scott topology.

For a poset $P$, we call the topology generated by basic open sets of the form $P\backslash \ua x$ for $x\in P$ \emph{the lower topology}, and denote it by $\omega(P)$. If $P$ is a dcpo, the common refinement $\sigma(P) \lor\omega(P)$ of the Scott topology and the lower topology is called the \emph{Lawson topology} and is denoted by $\lambda(P)$. The space $(P,\lambda(P))$ is written $\Lambda(P)$.

In our proof in Section~4 we will use the following lemma.

\begin{lemma}\label{lower}
Let $L$ be an $\omega$-algebraic domain. Then the elements of the closed base of the lower topology on $L$ generated by $\{\ua x:x\in L\}$ are $G_{\delta}$ sets in the Scott topology of $L$.
\end{lemma}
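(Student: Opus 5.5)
The closed base of the lower topology generated by $\{\ua x : x\in L\}$ consists of finite unions $\ua x_1\cup\dots\cup\ua x_n$. A finite union of $G_\delta$ sets is again $G_\delta$, since $\bigcap_i U_i\cup\bigcap_j V_j=\bigcap_{i,j}(U_i\cup V_j)$. So it suffices to show that each single set $\ua x$, for $x\in L$, is $G_\delta$ in the Scott topology of $L$. (If one also allows finite intersections of the subbasic sets, these are intersections of finitely many $G_\delta$ sets and hence still $G_\delta$.)

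Fix $x\in L$. Since $L$ is algebraic, $x=\sup(\da x\cap K(L))$. For a compact element $k$, the set $\ua k$ is Scott open: it is an upper set, and if $\sup D\geq k$ for a directed $D$, then $k\ll k$ gives $D\cap\ua k\neq\varnothing$. I claim
$$\ua x=\bigcap\{\ua k : k\in K(L),\ k\leq x\}.$$
The inclusion $\subseteq$ is immediate. For $\supseteq$, suppose $y\geq k$ for every compact $k\leq x$. Then $y$ is an upper bound of $\da x\cap K(L)$, so $y\geq\sup(\da x\cap K(L))=x$.

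Because $L$ is $\omega$-algebraic, $K(L)$ is countable, so the intersection above is countable. Hence $\ua x$ is a countable intersection of Scott open sets, that is, a $G_\delta$ set in the Scott topology. Combined with the reduction in the first paragraph, this proves the lemma.

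No step is a real obstacle. The only points that need care are checking that $\ua k$ is Scott open for compact $k$, and making sure the intersection runs over compact elements, so that it is countable, rather than over all elements way below $x$.
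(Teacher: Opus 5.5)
Your proof is correct and follows the paper's argument: write $\ua x=\bigcap_{k\in\da x\cap K(L)}\ua k$, observe that each $\ua k$ with $k$ compact is Scott open and that the intersection is countable, then pass to finite unions. You also prove directly the facts the paper only cites or asserts, namely that $\ua k$ is Scott open (from $k\ll k$) and that $G_\delta$ sets are closed under finite unions.
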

\begin{proof}
For each $x\in L$, since $L$ is an $\omega$-algebraic domain, we have
$
x=\sup(\da x\cap K(L)).
$
It follows that
$
\ua x=\bigcap_{k\in \da x\cap K(L)}\ua k.
$
Since $L$ is $\omega$-algebraic, $K(L)$ is countable; hence, $\da x\cap K(L)$ is countable. Moreover, by \cite[Proposition II-1.6]{clad3}, $\ua k$ is Scott open for each $k\in K(L)$. Therefore, $\ua x$ is a countable intersection of Scott-open sets; thus it is a $G_{\delta}$ set in the Scott topology of $L$.

The closed base of the lower topology generated by $\{\ua x:x\in L\}$ consists of finite unions of such sets. Since finite unions of $G_{\delta}$ sets are again $G_{\delta}$, every element of this closed base is a $G_{\delta}$ set in the Scott topology of $L$.	
\end{proof}

\begin{definition}\cite{dBPS20,BKS}
Let $\prec$ be a transitive relation on $\omega$. A subset $I\subseteq \omega$ is a \emph{rounded ideal} (with respect to $\prec$) if the following hold:
\begin{enumerate}
	\item $I\neq \varnothing$;
	\item $y\in I\land x\prec y\Longrightarrow x\in I$;
	\item $x,y\in I\Longrightarrow \exists z\in I, x\prec z\land y\prec z$.
\end{enumerate}
The collection $\mbox{RI}(\prec)$ of all rounded ideals has the topology generated by the basic open sets of the form $[n]_{\prec}=\{I\in \mbox{RI}(\prec): n\in I\}$ for $n\in \omega$. This topology coincides with the subspace topology of $\mbox{RI}(\prec)$ inherited from $\mathcal{P}(\omega)$. 

For each $n\in \omega$, we define $\da_{\prec}n=\{m\in \omega:m\prec n\}$ and $\ua_{\prec}n=\{m\in \omega:n\prec m\}$. This definition can be easily replicated if $n$ is replaced by a subset of $\omega$.
\end{definition}

By \cite[Theorem 12]{dBPS20}, all quasi-Polish spaces can be represented as rounded ideal spaces: A topological space is quasi-Polish if and only if it is homeomorphic to $\mbox{RI}(\prec)$ for some transitive relation $\prec$ on $\omega$ (also see \cite[Section 2]{BKS}).

\section{Construction of a universal quasi-Polish space}
In this section, we construct a universal quasi-Polish space $\mathbb{Y}$ with the property that every quasi-Polish space is homeomorphic to a closed subspace of $\mathbb{Y}$. 

Throughout this section, for notational simplicity, we let $2^{\omega}$ denote $\mathbb{S}^\omega=(\Sigma 2)^\omega$, which is essentially the same as $\mathcal{P}(\omega)$ with the inclusion order and the Scott topology.

Let 
$$X=(2^{\omega}\times \mathcal{O}(2^{\omega}))\times (2^{\omega}\times F(2^{\omega})), $$
where both $\mathcal{O}(2^{\omega})$ and $F(2^{\omega})$ are equipped with the Lawson topology induced by the inclusion order. Let $\mathbb{X}$ be the countable product of $X$, that is, $\mathbb{X}=X^{\omega}$. In particular, $\mathbb{X}$ has the product topology. Finally, define 
$$\mathbb{Y}=\big\{(x_n,U_n,x_n',A_n)_{n\in \omega}\in \mathbb{X}: \forall n\in \omega, x_n=x_n';\forall n\in \omega, x_n\in U_n \lor x_n'\in A_n;\forall n,m\in \omega, x_n=x_m\big\}. $$

We show that $\mathbb{Y}$ is a quasi-Polish space. Before doing so, we first study the auxiliary space $\mathbb{X}$. 
\begin{lemma}\label{Z}
The space $\mathbb{X}$ is a quasi-Polish space.
\end{lemma}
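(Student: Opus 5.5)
Since quasi-Polish spaces are closed under countable products (de Brecht \cite{B}), it suffices to show that $X$ is quasi-Polish. For that it is enough that each of the four factors $2^{\omega}$, $\mathcal{O}(2^{\omega})$, $F(2^{\omega})$ (the last two with the Lawson topology), and $2^\omega$ again, is quasi-Polish. Then $X$, as a finite product, and $\mathbb{X}=X^\omega$, as a countable product, are quasi-Polish. The factor $2^{\omega}=\mathbb{S}^\omega$ is quasi-Polish, as recalled in Section 2.

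For the hyperspace factors I plan to use a lattice-theoretic fact. If $L$ is an $\omega$-continuous (here $\omega$-algebraic) domain that is a complete lattice, then the Lawson topology on $L$ is compact, Hausdorff and second countable \cite{clad3}. Hence it is compact metrizable, in particular Polish, and so quasi-Polish.

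\textbf{The open-set lattice.} I would show that $\mathcal{O}(2^\omega)$ under inclusion is an $\omega$-algebraic complete lattice. It is a complete lattice because arbitrary unions of open sets are open. Since $2^\omega=\Sigma\mathcal{P}(\omega)$ is the Scott space of an algebraic domain, it is a locally compact sober space. Its open-set lattice is therefore continuous; more precisely it is algebraic, with compact elements the finite unions of basic opens $\ua F$, $F$ finite. These compact elements are countable in number, so the lattice is $\omega$-algebraic and its Lawson topology is compact metrizable.

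\textbf{The closed-set lattice.} I would treat $F(2^\omega)$ under inclusion the same way. It is a complete lattice, with joins given by the closure of the union and meets by intersection. The main obstacle is showing it is continuous, because the closed sets of a Scott space generally do not form a continuous lattice under inclusion. My first attempt is to use that for $\mathcal{P}(\omega)$ the Scott closed sets are the lower sets closed under directed sups, and to write each closed set as a directed union of finitely generated pieces $\da F$. These are Scott closed, since $\da x$ is Scott closed, and I would check whether they behave as compact elements, i.e.\ whether $\da F\ll\da F$.

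If this fails, the fallback is to avoid continuity altogether. One can instead identify $(F(2^\omega),\lambda)$ directly with a $\mathbf{\Pi}^0_2$ subspace of some $\mathbb{S}^\omega$ or of a Polish space. Concretely, encode a closed set $A$ by the pair of countable data $\{n: A\cap B_n\neq\varnothing\}$ and $\{F: A\subseteq \text{complement of some basic}\}$, and verify that the image is $\mathbf{\Pi}^0_2$.

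With all factors quasi-Polish, closure under finite and countable products completes the proof of Lemma~\ref{Z}.
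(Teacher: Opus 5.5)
Your overall architecture matches the paper's: show each factor is quasi-Polish, then use closure of quasi-Polish spaces under countable products. Your treatment of $2^\omega$ and of $\mathcal{O}(2^\omega)$ is correct.

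\textbf{The gap.} The factor $F(2^\omega)$ with the Lawson topology is left unresolved. You flag it as the main obstacle, but you only propose to ``check'' whether $\da F\ll\da F$. The fallback encoding is not made precise: the second datum is not well defined. The essential verification, that the encoding is a homeomorphism for the Lawson topology onto a $\mathbf{\Pi}^0_2$ set, is not carried out.

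Moreover, the first attempt as phrased fails. Joins of directed families in $F(2^\omega)$ are closures of unions, not unions. For an arbitrary finite $F\subseteq\mathcal{P}(\omega)$ the set $\da F$ need not be compact. For instance, $\da\{\omega\}=\mathcal{P}(\omega)=cl\bigl(\bigcup_i\da\{\{0,\dots,i\}\}\bigr)$, since every set is a directed sup of finite sets. Yet $\omega$ lies in no $\da\{\{0,\dots,i\}\}$.

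\textbf{The missing idea.} Restrict to finite sets $F$ of \emph{compact} elements of $\mathcal{P}(\omega)$, i.e.\ finite sets of finite subsets of $\omega$.
\begin{itemize}
\item For finite $k$, the set $\ua k$ is Scott open, hence the least open neighbourhood of $k$. So $k\in cl\bigl(\bigcup_i C_i\bigr)$ forces $\ua k\cap C_i\neq\varnothing$ for some $i$, i.e.\ $k\in C_i$, since $C_i$ is a lower set. By directedness, $\da F$ is then compact.
\item Every closed $A$ is the join of the directed family $\{\da F: F\subseteq_{fin}A\cap K(\mathcal{P}(\omega))\}$. This is because each $x\in A$ is the directed sup of its finite subsets, and Scott closed sets are closed under directed sups.
\end{itemize}
This is exactly \cite[Lemma 11]{ZhaoDongsheng}, which the paper invokes. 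It gives that $F(2^\omega)$ is $\omega$-algebraic, so its Lawson topology is compact metrizable.

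\textbf{Repairing your fallback.} It can be made to work directly. If $B_n$ ranges over the sets $\ua k$, your first datum $\{n: A\cap B_n\neq\varnothing\}$ is exactly $A\cap K(\mathcal{P}(\omega))$, and your second datum is just its complement. The point is to map into the Cantor space $\{0,1\}^{K(\mathcal{P}(\omega))}$ with discrete factors, not into $\mathbb{S}^\omega$. The Lawson topology is generated by the sets $\{A:k\in A\}$ together with the sets $\{A:k\notin A\}$. Hence $A\mapsto A\cap K(\mathcal{P}(\omega))$ is a homeomorphism onto the closed set of lower subsets of $K(\mathcal{P}(\omega))$.

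Finally, your worry about continuity does not apply here. For a continuous dcpo $P$, the closed-set lattice is the order dual of the completely distributive lattice $\sigma(P)$. It is therefore itself completely distributive, hence continuous.
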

\begin{proof}
By \cite[Lemma 11]{ZhaoDongsheng}, the lattice of Scott closed subsets of an algebraic domain is algebraic. Hence, $F(2^{\omega})$ is an algebraic lattice. Since $2^{\omega}$ is countably based, the argument in the proof of \cite[Lemma 11]{ZhaoDongsheng} further implies that $F(2^{\omega})$ is a countably based algebraic domain. Therefore, $F(2^{\omega})$, equipped with the Lawson topology, is Polish by \cite[Proposition V-5.17]{clad3}.

By \cite[Exercise 8.3.22]{nht2}, $\mathcal{O}(2^{\omega})$ is an $\omega$-algebraic lattice. Hence, applying \cite[Proposition V-5.17]{clad3} again, we conclude that $\mathcal{O}(2^{\omega})$ endowed with the Lawson topology is Polish.

Moreover, $2^{\omega}$ is quasi-Polish (this is in particular a consequence of \cite[Theorem 44]{B}). Since quasi-Polish spaces are closed under countable products by \cite[Corollary 43]{B}, it follows that $\mathbb{X}$ is quasi-Polish.
\end{proof}

\begin{lemma}\label{scott}
The Scott topology of the product poset $2^{\omega}\times(\mathcal{O}(2^{\omega}),\subseteq)\times 2^{\omega}\times (F(2^{\omega}),\subseteq)$ is contained in the product topology of the space $X$.
\end{lemma}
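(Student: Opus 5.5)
Write $P=2^{\omega}\times(\mathcal{O}(2^{\omega}),\subseteq)\times 2^{\omega}\times (F(2^{\omega}),\subseteq)$. The plan is to reduce the claim to a statement about each factor separately, where it is immediate, and to pay for the reduction with the continuity of all four factors.

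The first step is to note that every factor poset is a continuous dcpo. In fact each is $\omega$-algebraic:
\begin{itemize}
\item $2^{\omega}\cong\mathcal{P}(\omega)$;
\item $\mathcal{O}(2^{\omega})$ is $\omega$-algebraic by \cite[Exercise 8.3.22]{nht2};
\item $F(2^{\omega})$ is a countably based algebraic lattice, as in the proof of Lemma~\ref{Z}.
\end{itemize}
For products of continuous dcpos, the Scott topology of the product coincides with the product of the Scott topologies \cite[Corollary II-4.13]{clad3}. Hence
\[
\sigma(P)=\sigma(2^{\omega})\times\sigma(\mathcal{O}(2^{\omega}))\times\sigma(2^{\omega})\times\sigma(F(2^{\omega})).
\]
This equality is the main point, since Scott topologies of products can in general be strictly finer than product topologies. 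I would cite the standard result rather than reprove it. If a direct argument is preferred, take a Scott open $W\subseteq P$ and a point $p\in W$. By continuity $p=\sup\dda p$, so some $q\ll p$ lies in $W$. Since the way-below relation in a finite product of dcpos is coordinatewise, the set $\prod_i\dda$-interior neighbourhoods $\prod_i \{y_i: q_i\ll y_i\}$ gives a product of Scott open sets (using interpolation in continuous dcpos) containing $p$ and contained in $\ua q\subseteq W$.

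Next, in the space $X$, the two copies of $2^{\omega}$ carry the Scott topology by convention. The factors $\mathcal{O}(2^{\omega})$ and $F(2^{\omega})$ carry the Lawson topology $\lambda=\sigma\vee\omega$, which contains the Scott topology. So each factor's Scott topology is contained in that factor's topology in $X$.

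Finally, taking products preserves these inclusions. Every basic open set of $\sigma(P)$, a product of factorwise Scott open sets, is therefore open in the product topology of $X$. Since $\sigma(P)$ equals the product of the factor Scott topologies, we conclude $\sigma(P)\subseteq$ the product topology of $X$, as required.

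The only real obstacle is the first step, the product formula, which relies on all factors being continuous. This is why the algebraicity facts from Lemma~\ref{Z} are needed. Everything after that is formal.
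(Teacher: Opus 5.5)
Your proof is correct and follows the paper's argument. The paper likewise uses the product theorem for Scott topologies of continuous dcpos (Theorem II-4.13 of the Compendium), then the inclusion of each Scott topology in the corresponding Lawson topology, then the fact that products preserve these inclusions; your explicit check that all four factors are $\omega$-algebraic, hence continuous, supplies the hypothesis of the product theorem that the paper leaves implicit.
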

\begin{proof}
By \cite[Theorem II-4.13]{clad3}, the Scott topology on
$
2^{\omega}\times(\mathcal{O}(2^{\omega}),\subseteq)\times 2^{\omega}\times(F(2^{\omega}),\subseteq)
$
coincides with the product topology of
$
2^{\omega}\times\Sigma(\mathcal{O}(2^{\omega}),\subseteq)\times 2^{\omega}\times\Sigma(F(2^{\omega}),\subseteq).
$
By the definition of the Lawson topology, the Scott topologies of
\(\mathcal{O}(2^{\omega})\) and \(F(2^{\omega})\) are contained in their respective Lawson topologies. Therefore, the Scott topology of the above product poset is contained in the product topology of \(X\), which proves the result.
\end{proof}

\begin{proposition}
The space $\mathbb{Y}$ is a quasi-Polish space.
\end{proposition}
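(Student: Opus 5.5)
The plan is to show that $\mathbb{Y}$ is a $\mathbf{\Pi}^0_2$-subset of the quasi-Polish space $\mathbb{X}$ (Lemma~\ref{Z}). Then $\mathbb{Y}$ is quasi-Polish, since $\mathbf{\Pi}^0_2$-subspaces of quasi-Polish spaces are quasi-Polish \cite[Theorem 23]{B}. Because countable intersections of $\mathbf{\Pi}^0_2$ sets are $\mathbf{\Pi}^0_2$, it suffices to treat each of the countably many defining conditions separately. The conditions $x_n=x_n'$ and $x_n=x_m$ each constrain only finitely many coordinates. Each of the remaining conditions, $x_n\in U_n$ or $x_n'\in A_n$, involves only the single factor $X$ in coordinate $n$. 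So by taking preimages under the continuous projections, I reduce everything to three claims about the factor spaces. First, the diagonal $\{(x,y)\in 2^\omega\times 2^\omega : x=y\}$ is $\mathbf{\Pi}^0_2$. Second, $\{(x,U)\colon x\in U\}$ is $\mathbf{\Pi}^0_2$ in $2^\omega\times\mathcal{O}(2^\omega)$. Third, $\{(x,A)\colon x\in A\}$ is $\mathbf{\Pi}^0_2$ in $2^\omega\times F(2^\omega)$. A finite union of $\mathbf{\Pi}^0_2$ sets is again $\mathbf{\Pi}^0_2$, which handles the disjunction.

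For the diagonal, I identify $2^\omega$ with $\mathcal{P}(\omega)$. Then $x=y$ holds iff for every $k\in\omega$ we have $(k\in x\to k\in y)$ and $(k\in y\to k\in x)$. Each set $\{k\in x\}$ is open, so each implication has the form $V^c\cup W$ with $V,W$ open. That is a difference-type set, hence $\mathbf{\Pi}^0_2$, and the countable intersection over $k$ stays $\mathbf{\Pi}^0_2$. (Alternatively, quasi-Polish spaces have $\mathbf{\Pi}^0_2$ diagonals.)

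For membership $x\in U$, I plan to show the set is in fact open. To do this I would use Lemma~\ref{scott}: it suffices that $\{(x,U)\colon x\in U\}$ is Scott open in the product poset $2^\omega\times\mathcal{O}(2^\omega)$, ordered by inclusion in each coordinate. It is an upper set. For directed suprema, suppose $(\sup D_1,\bigcup \mathcal{D}_2)$ lies in the set. Then $\sup D_1\in U_i$ for some $U_i$ in $\mathcal{D}_2$. Since $U_i$ is Scott open, some element of $D_1$ lies in $U_i$, and directedness finishes the argument. In fact $\{(x,U)\colon x\in U\}=\bigcup_{k\in K}\ua k\times\{U\colon \ua k\subseteq U\}$ over compact $k$, and $\{U\colon \ua k\subseteq U\}=\{U\colon k\in U\}$ is Scott open in $\mathcal{O}(2^\omega)$.

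For membership $x\in A$ with $A$ closed, I expect the complement to be open: $x\notin A$ iff some basic Scott open $\ua k$ with $k$ finite satisfies $x\in\ua k$ and $A\cap\ua k=\varnothing$. The set $\{A\colon A\cap \ua k=\varnothing\}$ is the complement of $\{A\colon A\supseteq cl(\ua k)\cap\ldots\}$, which is awkward. Instead I would write it as $\{A\colon A\subseteq (\ua k)^c\}=\da\big((\ua k)^c\big)$ in $F(2^\omega)$. This is the complement of $F(2^\omega)\setminus\da C$ with $C=(\ua k)^c$, and since $\da C$ is closed in the lower topology it is Lawson closed, not open. This is the main obstacle. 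The membership set is then the intersection over $k$ of sets of the form $(\ua k)^c\times F \cup 2^\omega\times \ua$-type sets. I will try to exploit that the Lawson topology on the continuous lattice $F(2^\omega)$ is Hausdorff and Polish, and that $\da C$ is Lawson closed. This should give $\{(x,A)\colon x\in A\}=\bigcap_{k}\big(((\ua k)^c\times F(2^\omega))\cup(2^\omega\times (F(2^\omega)\setminus \da (\ua k)^c))\big)$. The set $\da(\ua k)^c$ is Lawson closed by Lemma~\ref{lower}-style arguments (it is $\ua$-complement data in the lower topology). Each term is a union of a closed set and a Lawson open set, hence $\mathbf{\Pi}^0_2$ (even $\mathbf{\Delta}^0_2$). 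Finally, I check the set identity: if $x\notin A$, then some compact $k\le x$ has $\ua k\cap A=\varnothing$ because $A^c$ is Scott open. The key subtlety is verifying that the Lawson topology really makes $\{A\colon A\not\subseteq C\}$ open, which follows because it is the complement of the principal ideal $\da C$, and principal ideals are lower-closed.
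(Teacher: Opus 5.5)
Your proposal is correct and is essentially the paper's proof. You write $\mathbb{Y}$ as a countable intersection of the same three kinds of $\mathbf{\Pi}^0_2$ conditions, get $\{x\in U\}$ open from Scott-openness and Lemma~\ref{scott}, and your final formula for $\{x\in A\}$ is exactly the paper's description of $H$ with $B_k=\ua k$, since $F(2^\omega)\setminus\da\big((\ua k)^c\big)=\diamondsuit(\ua k)$.

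One correction: $\da\big((\ua k)^c\big)$ is Lawson closed because principal ideals are Scott closed, not because it is closed in the lower topology (it is not). In fact $\da\big((\ua k)^c\big)=\{A\colon k\notin A\}=F(2^\omega)\setminus\{A\colon \da k\subseteq A\}$ is the complement of a principal filter, hence lower-open and so Lawson clopen; your ``main obstacle'' disappears, and $\{(x,A)\colon x\in A\}$ is even closed.
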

\begin{proof}
According to Lemma \ref{Z}, \(\mathbb{X}\) is quasi-Polish. By \cite[Theorem 23]{B}, a subspace of a quasi-Polish space is quasi-Polish if and only if it is a \(\mathbf{\Pi}^{0}_{2}\)-subset. Therefore, it suffices to show that \(\mathbb{Y}\) is a \(\mathbf{\Pi}^{0}_{2}\)-subset of \(\mathbb{X}\). Since \(2^{\omega}\) is countably based, let \(\{B_n:n\in\omega\}\) be a countable topological base for \(2^{\omega}\).
	
We analyze the complexity of \(\mathbb{Y}\) component by component.

\(\bullet\) \(\forall n\in\omega,\ x_n=x_n'\).

 Define
$
Y_1=\big\{(x_n,U_n,x_n',F_n)_{n\in\omega}\in\mathbb{X}:\forall n\in\omega,\ x_n=x_n'\big\}.
$
Since \(2^{\omega}\) is a \(T_0\) space, we have
\[
\forall n\in\omega,\ x_n=x_n'
\mbox{ if and only if }
\forall n\in\omega,\forall k\in\omega,\ x_n\in B_k\iff x_n'\in B_k .
\]
It follows that
\[
\begin{aligned}
	Y_1
	=
	\bigcap_{n\in\omega}\bigcap_{k\in\omega}
	\Big(
	&P_n^{-1}\big(B_k^c\times\mathcal{O}(2^{\omega})\times B_k^c\times F(2^{\omega})\big)\\
	&\cup
	P_n^{-1}\big(B_k\times\mathcal{O}(2^{\omega})\times B_k\times F(2^{\omega})\big)
	\Big),
\end{aligned}
\]
where \(P_n:\mathbb{X}\rightarrow X\) denotes the \(n\)-th projection map.

\(\bullet\) \(\forall n\in\omega,\ x_n\in U_n\) or \(x_n'\in F_n\). 

Define
$
Y_2=\big\{(x_n,U_n,x_n',F_n)_{n\in\omega}\in\mathbb{X}\colon 
\forall n\in\omega,\ x_n\in U_n\lor x_n'\in F_n\big\}.
$

Let
$
G=\{(x,U,x',F)\in X\colon x\in U\}.
$
It is straightforward to verify that \(G\) is Scott open in \(X\). By Lemma \ref{scott}, \(G\) is open in \(X\).

Next, note that \(F_n\) is closed in \(2^{\omega}\) for every \(n\in\omega\). Hence,
\[
x_n'\in F_n
\mbox{ if and only if }
x_n'\in cl(F_n).
\]
Thus
\[
x_n'\in F_n
\mbox{ if and only if }
\forall k\in\omega,\ x_n'\in B_k\Longrightarrow B_k\cap F_n\neq\varnothing .
\]
Consequently, the set
$
H=\{(x,U,x',F)\in Z:x'\in F\}
$
can be written as
\[
\bigcap_{k\in\omega}
\big(
(2^{\omega}\times\mathcal{O}(2^{\omega})\times B_k^c\times F(2^{\omega}))
\cup
(2^{\omega}\times\mathcal{O}(2^{\omega})\times B_k\times\diamondsuit B_k)
\big),
\]
where
$
\diamondsuit B_k=\{F\in F(2^{\omega}):F\cap B_k\neq\varnothing\}.
$
It is immediate that \(\diamondsuit B_k\) is Scott open in
\((F(2^{\omega}),\subseteq)\), and hence, it is Lawson open.

Therefore,
\[
Y_2=\bigcap_{n\in\omega}P_n^{-1}(G\cup H),
\]
where \(P_n:\mathbb{X}\rightarrow X\) is the \(n\)-th projection map.

\(\bullet\) \(\forall n,m\in\omega,\ x_n=x_m\). 

Define
$
Y_3=\{(x_n,U_n,x_n',F_n)_{n\in\omega}\in\mathbb{Z}:
\forall n,m\in\omega,\ x_n=x_m\}.
$
Similarly to the first clause, we have
\[
\forall n,m\in\omega,\ x_n=x_m
\mbox{ if and only if }
\forall n,m\in\omega,\forall k\in\omega,\ 
x_n\in B_k\iff x_m\in B_k .
\]
Accordingly,
\[
\begin{aligned}
	Y_3
	=
	\bigcap_{(n,m)\in\omega\times\omega}
	\bigcap_{k\in\omega}
	\bigg(
	&
	\Big(
	P_n^{-1}\big(B_k^c\times\mathcal{O}(2^{\omega})\times2^{\omega}\times F(2^{\omega})\big)
	 \cap
	P_m^{-1}\big(B_k^c\times\mathcal{O}(2^{\omega})\times2^{\omega}\times F(2^{\omega})\big)
	\Big)
	\\
	&\cup
	\Big(
	P_n^{-1}\big(B_k\times\mathcal{O}(2^{\omega})\times2^{\omega}\times F(2^{\omega})\big)
	\cap
	P_m^{-1}\big(B_k\times\mathcal{O}(2^{\omega})\times2^{\omega}\times F(2^{\omega})\big)
	\Big)
	\bigg).
\end{aligned}
\]

In conclusion,
$
\mathbb{Y}=Y_1\cap Y_2\cap Y_3.
$
Therefore, \(\mathbb{Y}\) is a \(\mathbf{\Pi}^{0}_{2}\)-subset of \(\mathbb{X}\).
\end{proof}

\begin{theorem}\label{closed}
	Every quasi-Polish space is homeomorphic to a closed subspace of \(\mathbb{Y}\).
\end{theorem}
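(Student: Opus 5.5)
The plan is to use de Brecht's representation of quasi-Polish spaces as $\mathbf{\Pi}^0_2$-subspaces of $2^\omega=\mathbb{S}^\omega$ \cite[Theorem 24]{B}, and to encode the $\mathbf{\Pi}^0_2$ description itself in the $\mathcal{O}(2^\omega)$ and $F(2^\omega)$ coordinates of $\mathbb{Y}$. Fixing those coordinates will then cut out a closed set.

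\textbf{Step 1: a normal form for the image.} Let $Q$ be quasi-Polish. By \cite[Theorem 24]{B} we may assume $Q\subseteq 2^\omega$ is a $\mathbf{\Pi}^0_2$-subset. By the definition of $\mathbf{\Sigma}^0_2$, its complement has the form $\bigcup_{n}(B_n\setminus B_n')$ with $B_n,B_n'$ open. Hence
$$Q=\bigcap_{n\in\omega}(U_n\cup A_n),$$
where $U_n=B_n'$ is open and $A_n=B_n^c$ is closed. Fix these sequences $(U_n)_n$ in $\mathcal{O}(2^\omega)$ and $(A_n)_n$ in $F(2^\omega)$.

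\textbf{Step 2: the embedding.} Define $e\colon Q\to\mathbb{X}$ by $e(x)=(x,U_n,x,A_n)_{n\in\omega}$. By Step 1, $e(x)\in\mathbb{Y}$ for $x\in Q$, since $x\in U_n\lor x\in A_n$ holds for every $n$. The map $e$ is continuous, because each coordinate is either the inclusion $Q\hookrightarrow 2^\omega$ or a constant map. It is injective. Its inverse on $e(Q)$ is the restriction of the projection onto the coordinate $x_0$, which is continuous. So $e$ is a homeomorphism onto $e(Q)$.

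\textbf{Step 3: the image is closed.} Let
$$C=\{(x_n,V_n,x_n',E_n)_n\in\mathbb{X}: \forall n,\ V_n=U_n,\ E_n=A_n\}.$$
We claim that $e(Q)=\mathbb{Y}\cap C$. One inclusion is clear. For the other, take a point of $\mathbb{Y}\cap C$. The defining clauses of $\mathbb{Y}$ force all $x_n$ and $x_n'$ to equal a common $x$, and they give $x\in U_n\cup A_n$ for all $n$. Thus $x\in Q$, and the point is $e(x)$.

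It remains to see that $C$ is closed in $\mathbb{X}$. This is the only place where the choice of the Lawson topology matters, and it is the step I expect needs care. By the proof of Lemma~\ref{Z}, $\mathcal{O}(2^\omega)$ and $F(2^\omega)$ with the Lawson topology are Polish, in particular Hausdorff. Hence singletons $\{U_n\}$ and $\{A_n\}$ are closed there. Alternatively, one can see this directly: $\{a\}=\ua a\cap\da a$, where $\ua a$ is lower-closed and $\da a$ is Scott closed, since Scott closures of points are principal down-sets. Therefore $C$ is a countable intersection of preimages of closed sets under projections, so $C$ is closed in $\mathbb{X}$. Consequently $e(Q)$ is closed in the subspace $\mathbb{Y}$, which proves Theorem~\ref{closed}.

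The main subtlety is Step 1: the $\mathbf{\Pi}^0_2$ set must be written as a countable intersection of sets of the form ``open $\cup$ closed''. This matches exactly the disjunction $x_n\in U_n\lor x_n'\in A_n$ in the definition of $\mathbb{Y}$, and it follows directly from de Brecht's definition of the hierarchy.
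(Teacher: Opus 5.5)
Your proof is correct and follows the paper's argument. It uses the same normal form $Q=\bigcap_{n}(U_n\cup A_n)$, the same map $x\mapsto(x,U_n,x,A_n)_{n\in\omega}$, and gets closedness from the separation properties of the Lawson topologies on $\mathcal{O}(2^\omega)$ and $F(2^\omega)$. The only difference is presentational: the paper takes a sequence $f(x^k)\to y$ and uses Hausdorffness to keep the constant coordinates fixed. You instead write the image directly as $\mathbb{Y}\cap C$, with $C$ closed because Lawson singletons are closed; this is slightly cleaner, since it needs only $T_1$ and no first countability.
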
	
\begin{proof}
		Let \(Z\) be a quasi-Polish space. By \cite[Theorem 23]{B}, we may assume that \(Z\) is a \(\mathbf{\Pi}^0_2\)-subset of \(2^\omega\). Hence, there exist open sets \(U_n\) and closed sets \(A_n\) in \(2^\omega\) such that
		\[
		Z=\bigcap_{n\in\omega}(A_n\cup U_n).
		\]
		Define \(f:Z\rightarrow\mathbb{Y}\) by
		\[
		(f(x))_n=(x,U_n,x,A_n),\qquad \forall n\in\omega .
		\]
		It is straightforward to verify that \(f(x)\in\mathbb{Y}\). Hence, \(f\) is well defined.
		
		We prove the theorem through the following two claims.
		
		\textbf{Claim 1:} \(f\) is a topological embedding.
		
		It is immediate that \(f\) is injective. To show that \(f\) is continuous, let \(x^k\rightarrow x\) in \(Z\). For every \(n\in\omega\), we have
		\[
		f(x^k)_n=(x^k,U_n,x^k,A_n)\rightarrow (x,U_n,x,A_n)=f(x)_n.
		\]
		Therefore,
		$
		f(x^k)\rightarrow f(x)
		$
		in \(\mathbb{Y}\); whence \(f\) is continuous.
		
		It remains to show that \(f^{-1}\) is continuous. Suppose that
		$
		f(x^k)\rightarrow f(x).
		$
		Then, for every \(n\in\omega\),
		\[
		f(x^k)_n=(x^k,U_n,x^k,A_n)\rightarrow f(x)_n=(x,U_n,x,A_n).
		\]
		It follows that
		$
		x^k\rightarrow x,
		$
		which proves the continuity of \(f^{-1}\). As a result, \(f\) is a topological embedding.
		
		\textbf{Claim 2:} \(f(Z)\) is a closed subset of \(\mathbb{Y}\).
		
		Let \(y\in cl(f(Z))\). Since \(\mathbb{Y}\) is quasi-Polish, it is first-countable; in consequence, there exists a sequence \((x^k)_{k\in\omega}\subseteq X\) such that
		$
		f(x^k)\rightarrow y.
	   $
		For every \(n\in\omega\),
		\[
		f(x^k)_n=(x^k,U_n,x^k,A_n)\rightarrow y_n .
		\]
		By the definition of \(\mathbb{Y}\), we may write
		$
		y_n=(y_n',U_n',y_n',A_n')\in Z,
		$
		where
		$
		y_n'\in U_n'\ \lor\ y_n'\in A_n'
		$
		for every \(n\in\omega\), and
		$
		y_n'=y_m'
		$
		for all \(n,m\in\omega\).
		
		Moreover, the constant sequence \(U_n\) converges to \(U_n'\) in \(\mathcal{O}(2^\omega)\), and the constant sequence \(A_n\) converges to \(A_n'\) in \(F(2^\omega)\). Since both \(\mathcal{O}(2^\omega)\) and \(F(2^\omega)\) are \(T_2\) spaces, we obtain
		$
		U_n=U_n', A_n'=A_n .
		$
		Therefore,
		$
		y_n'\in U_n\mbox{ and } y_n'\in A_n
		$
		for every \(n\in\omega\). Since
		$
		Z=\bigcap_{n\in\omega}(A_n\cup U_n),
		$
		we conclude that
		$
		y_n'\in Z .
		$
		Let \(y'=y_n'\) (which is independent of \(n\)). Then, by the definition of \(f\),
	$
		f(y')=y .
		$
		Hence,
		$
		y\in f(Z),
		$
		which shows that \(f(Z)\) is closed in \(\mathbb{Y}\).
		
		Therefore, every quasi-Polish space is homeomorphic to a closed subspace of \(\mathbb{Y}\).
	\end{proof}

\section{Descriptive complexity of the class of all Polish spaces}

For any topological space \(X\), let \(F(X)\) be the set of all closed subsets of \(X\). Equip $F(X)$ with the lower Vietoris topology, where the basic open sets are of the form
\[
\diamondsuit U=\{F\in F(X):F\cap U\neq\varnothing\},
\]
where \(U\subseteq X\) is open.

It is known that if \(X\) is a quasi-Polish space, then \(F(X)\) endowed with the lower Vietoris topology is also quasi-Polish (see, e.g., \cite[Proposition 3.1 and Example 3.2]{dBK19}). Because of the universality property of the space \(\mathbb{Y}\) constructed in the preceding section, $F(\mathbb{Y})$ becomes a quasi-Polish hyperspace of all quasi-Polish spaces. In particular, \(F(\mathbb{Y})\) is a standard Borel space. 

Consider
$
\mathsf{P}=\{F\in F(\mathbb{Y}):F\ \mathrm{is\ Polish}\}.
$
In the rest of this section we investigate the descriptive complexity of \(\mathsf{P}\) as a subset of $F(\mathbb{Y})$. We first note the following result.
\begin{proposition}
$\mathsf{P}$ is a coanalytic subset of $F(\mathbb{Y})$.
\end{proposition}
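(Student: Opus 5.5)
We will show that the complement $F(\mathbb{Y})\setminus\mathsf{P}$ is analytic. The plan rests on a characterization of Polishness inside the quasi-Polish world. A quasi-Polish space is Polish if and only if it is regular $T_1$: every countably based regular $T_1$ space is metrizable by Urysohn, and a metrizable quasi-Polish space is Polish by de Brecht \cite{B}. For a closed $F\subseteq\mathbb{Y}$, which is quasi-Polish as a $\mathbf{\Pi}^0_2$ subspace, Polishness therefore becomes a condition about points of $F$ and members of a fixed countable base $\{V_k\}_{k\in\omega}$ of $\mathbb{Y}$.

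Concretely, $F$ is regular $T_1$ if and only if two conditions hold. First, for all $y,z\in F$ with $y\neq z$ we need $y\notin cl(\{z\})$ in $F$. Second, for all $y\in F$ and all $k$ with $y\in V_k$ there is $l$ with $y\in V_l$ and $cl(V_l\cap F)\cap F\subseteq V_k$. Negating, $F\notin\mathsf{P}$ if and only if one of the following occurs:
\begin{itemize}
\item[(a)] there exist $y\neq z$ in $F$ such that for all $k$, $y\in V_k\Rightarrow z\in V_k$;
\item[(b)] there exist $y\in F$ and $k$ with $y\in V_k$ such that for every $l$ with $y\in V_l$ there is $w\in F\setminus V_k$ with $w\in cl(V_l\cap F)$.
\end{itemize}
In (b) the inner quantifier over $w$ is existential, and the condition $w\in cl(V_l\cap F)$ means: for all $m$, $w\in V_m\Rightarrow F\cap V_l\cap V_m\neq\varnothing$.

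The key step is to put all quantifiers in front. In (b) the universal quantifier over $l$ sits outside the existential quantifier over $w$. We handle this by choosing a whole sequence $(w_l)_{l\in\omega}\in\mathbb{Y}^\omega$ at once, which is projection along the standard Borel space $\mathbb{Y}^\omega$. Then $F\notin\mathsf{P}$ if and only if there exist $(y,z,(w_l)_l)\in\mathbb{Y}\times\mathbb{Y}\times\mathbb{Y}^\omega$ and $k$ satisfying a countable Boolean combination of the following atomic conditions:
\begin{itemize}
\item $y\in F$, and similarly for $z$ and each $w_l$;
\item $y\in V_k$, and similarly for the other points;
\item $y\neq z$;
\item $F\cap V_l\cap V_m\neq\varnothing$.
\end{itemize}

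It remains to check that each atomic condition is Borel in $F(\mathbb{Y})\times\mathbb{Y}\times\mathbb{Y}\times\mathbb{Y}^\omega$.
\begin{itemize}
\item The set $\{F: F\cap V_l\cap V_m\neq\varnothing\}=\diamondsuit(V_l\cap V_m)$ is open in the lower Vietoris topology.
\item Membership in $V_k$ is open in the point coordinate.
\item The diagonal is $\mathbf{\Pi}^0_2$ by the $T_0$ argument already used for $Y_1$.
\item $\{(F,y):y\in F\}$ is Borel because $y\in F\iff\forall m\,(y\in V_m\Rightarrow F\in\diamondsuit V_m)$, exactly as in the treatment of $H$ above.
\end{itemize}
Thus $F(\mathbb{Y})\setminus\mathsf{P}$ is the projection of a Borel subset of a product of standard Borel spaces, so it is analytic, and $\mathsf{P}$ is coanalytic.

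I expect the main obstacle to be justifying the characterization "quasi-Polish plus regular $T_1$ implies Polish" and getting its negation exactly right, in particular ensuring that the closures are computed inside $F$. If citing de Brecht is not sufficient, a fallback is to say that $F$ is Polish if and only if there exists a sequence in $F$ that is dense and has a compatible complete metric coded as a real. That formulation, however, yields a $\mathbf{\Sigma}^1_2$-looking definition, so the regularity route is preferable.
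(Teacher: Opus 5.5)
Your proposal is correct and takes essentially the same route as the paper. Both reduce Polishness of a closed $F$ to regularity: the paper cites Chen's result that a quasi-Polish space is Polish iff it is regular, while you argue via Urysohn plus de Brecht's ``metrizable quasi-Polish $=$ Polish''. Both then show that failure of regularity, written with a countable base and existential quantifiers over points of $\mathbb{Y}$, yields an analytic complement. Your $T_1$ clause (a) is harmless but redundant, since a regular $T_0$ space is already Hausdorff. Your choice of a whole sequence $(w_l)_l$ simply makes explicit the closure of analytic sets under countable intersections that the paper uses implicitly.
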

\begin{proof}
From \cite[Corollary 5.3]{Ruiyuan}, a quasi-Polish space is Polish if and only if it is regular.
Since \(\mathbb{Y}\) is countably based, let \(\{U_n:n\in\omega\}\) be a countable topological base for \(\mathbb{Y}\). For every \(F\in F(\mathbb{Y})\), \(F\) is not regular if and only if there exist \(x_0\in F\) and \(n\in\omega\) such that \(x_0\in U_n\) and, for every \(k\in\omega\) with \(x_0\in U_k\subseteq U_n\), we have
$
cl(F\cap U_k)\not\subseteq U_n.
$

For any \(n,k\in\omega\), define
$
B_k^n=\big\{F\in F(\mathbb{Y}): cl(F\cap U_k)\not\subseteq U_n\big\}.
$
By the definition of \(B_k^n\), we have
\[
\begin{aligned}
	F\in B_k^n
	&\Longleftrightarrow
	\exists a\in cl(F\cap U_k)\setminus U_n\\
	&\Longleftrightarrow
	\exists a\in\mathbb{Y}\,
	\left[
	a\in U_n^c
	\ \wedge\
	\forall m\in\omega\,
	\bigl(a\in U_m\Rightarrow U_m\cap F\cap U_k\neq\varnothing\bigr)
	\right].
\end{aligned}
\]
It follows that \(B_k^n\) is analytic for every \(n,k\in\omega\). Consequently, \(F(\mathbb{Y})\setminus \mathsf{P}\) is analytic. Therefore, \(\mathsf{P}\) is a coanalytic subset of \(F(\mathbb{Y})\).
\end{proof}

Next, we prove that \(\mathsf{P}\) is not Borel. To this end, we define some isomorphic copies of $\mathbb{Y}$ and $\mathsf{P}$, and work with them. 

Let $\omega^{<\omega}$ be the set of all finite sequences in $\omega$. For each $s\in \omega^{\textless\omega}$, let $\underline{s}$ be a new symbol, and let $\Omega=\{s,\underline{s}: s \in \omega^{\textless\omega}\}$. \(\Omega\) is still countably infinite and there is a bijection between \(\Omega\) and \(\omega\). We fix once and for all such a bijection, and view $\Omega$ as a copy of $\omega$.
Let \(2^{\Omega}\) denote the product space $(\Sigma 2)^{\Omega}$. Then $2^{\Omega}$ is homeomorphic to $2^\omega$. As in the definition of $\mathbb{Y}$, equip both \(\mathcal{O}(2^{\Omega})\) and \(F(2^{\Omega})\) with their respective Lawson topologies. Let $$\overline{X}=(2^{\Omega}\times \mathcal{O}(2^{\Omega}))\times (2^{\Omega}\times F(2^{\Omega})) $$
and $\overline{\mathbb{X}}$ be the countable product of copies of $\overline{X}$, that is, $\overline{\mathbb{X}}=\overline{X}^{\omega}$. Finally, define 
$$\overline{\mathbb{Y}}=\big\{(x_n,U_n,x_n',A_n)_{n\in \omega}\in \overline{\mathbb{X}}: \forall n\in \omega, x_n=x_n';\forall n\in \omega, x_n\in U_n \lor x_n'\in A_n;\forall n,m\in \omega, x_n=x_m\big\}. $$
It is easily seen that \(\overline{\mathbb{Y}}\) is homeomorphic to \(\mathbb{Y}\). In particular, \(\overline{\mathbb{Y}}\) is quasi-Polish.

Let $\overline{\mathsf{P}}=\{F\in F(\overline{\mathbb{Y}}):F~\mathrm{is ~Polish}\}$. Our main theorem of this section is the following.

\begin{theorem}\label{Pi}
$\overline{\mathsf{P}}$ is ${\mathbf{\Pi}}^1_1$-complete. In particular, $\overline{\mathsf{P}}$ is not Borel. Consequently, $\mathsf{P}$ is $\mathbf{\Pi}^1_1$-complete and not Borel.
\end{theorem}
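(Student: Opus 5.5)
Since coanalyticity of $\mathsf{P}$ was established in the preceding proposition (and transfers to $\overline{\mathsf{P}}$ via the homeomorphism $\overline{\mathbb{Y}}\cong\mathbb{Y}$), it suffices to construct a Borel map $\Phi\colon \mathrm{TR}\to F(\overline{\mathbb{Y}})$ with $T\in\mathrm{WF}\iff \Phi(T)\in\overline{\mathsf{P}}$. Since $\mathrm{WF}$ is $\mathbf{\Pi}^1_1$-complete, this makes $\overline{\mathsf{P}}$ $\mathbf{\Pi}^1_1$-complete, hence not Borel by Suslin's theorem. The homeomorphism $\overline{\mathbb{Y}}\cong\mathbb{Y}$ induces a homeomorphism $F(\overline{\mathbb{Y}})\cong F(\mathbb{Y})$ (lower Vietoris topologies correspond) mapping $\overline{\mathsf{P}}$ onto $\mathsf{P}$, so the statement for $\mathsf{P}$ follows.

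\textbf{The construction.} For each tree $T$, I would build a $\mathbf{\Pi}^0_2$ subspace $Z_T\subseteq 2^{\Omega}$ whose non-Hausdorffness is witnessed exactly by branches of $T$. A natural choice uses the coordinates $s$ and $\underline{s}$ of $\Omega$ for $s\in T$. Let $Z_T$ consist of the characteristic points $\chi_{\{t\,:\,t\subseteq s\}\cup\{\underline{s}\}}$ for $s\in T$ (isolated-type points), together with $\chi_{\{x|_n:n\in\omega\}}$ for $x\in[T]$. The limit points $\chi_{\{x|_n\}}$ are then specializations that break regularity: in the Scott topology, every open set around $\chi_{\{x|_n\}}$ contains points $\chi_{\{t\subseteq x|_n\}\cup\{\underline{x|_n}\}}$. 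The details should be arranged so that if $[T]=\varnothing$, then $Z_T$ is a countable discrete space (hence Polish), while if $x\in[T]$, then $Z_T$ contains a point whose closure is non-trivial, so $Z_T$ is not $T_1$ and therefore not Polish (Polish spaces are regular and $T_1$; equivalently one can use \cite[Corollary 5.3]{Ruiyuan}). Concretely, $Z_T$ can be written as $\bigcap_n (A_n^T\cup U_n^T)$, with $U_n^T,A_n^T$ depending on $T$ in a simple way. For instance, the conditions can be: ``membership is downward closed along $\omega^{<\omega}$ inside $T$'', ``at most one underlined coordinate'', ``if $\underline{s}$ is present then nothing beyond $s$'', and ``if $s$ is present with no underline, some extension is present''. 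The first three are closed conditions and the last is a union of open sets, so $Z_T$ is $\mathbf{\Pi}^0_2$. Set $\Phi(T)=f_T(Z_T)$, where $f_T$ is the closed embedding of Theorem \ref{closed} built from these $U_n^T,A_n^T$.

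\textbf{Borelness of $\Phi$.} It suffices to check that $\{T:\Phi(T)\cap W\neq\varnothing\}$ is Borel for basic open $W\subseteq\overline{\mathbb{Y}}$. Since $\Phi(T)=\{(z,U_n^T,z,A_n^T)_n: z\in Z_T\}$ and $Z_T$ is countable up to the branch points, I would first show that $T\mapsto U_n^T\in\mathcal{O}(2^\Omega)$ and $T\mapsto A_n^T\in F(2^\Omega)$ are Borel (indeed continuous) into the Lawson topologies. Lemma \ref{lower} helps here: the Lawson subbasic sets are Scott open or Scott $G_\delta$, so it suffices to check Scott-type conditions, which are determined by finitely many coordinates of $T$. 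Then $\Phi(T)\cap W\neq\varnothing$ is equivalent to the existence of a point of $Z_T$ in a section. I would show this is equivalent to a countable disjunction over $s\in T$ of Borel conditions, because each branch point lies in the closure of the countable dense set $\{\chi_{\{t\subseteq s\}\cup\{\underline s\}}\}$ of $Z_T$, and membership in an open set is witnessed on a dense set.

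\textbf{Main obstacle.} I expect the main obstacle to be the correctness of $Z_T$. One must ensure that in the well-founded case no spurious points (e.g.\ the empty set, or infinite non-branch configurations) lie in $Z_T$, so that $Z_T$ really is countable discrete and $T_2$. In the ill-founded case one must verify that the branch point genuinely fails $T_1$. I would tune the $\Pi^0_2$ conditions, especially the ``some extension present'' open clause, until these two case checks go through.
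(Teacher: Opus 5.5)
There is a genuine gap. With the point set you specify, $Z_T$ is Polish for \emph{every} tree $T$, so $\Phi$ sends all of $\mathrm{TR}$ into $\overline{\mathsf{P}}$ and is not a reduction.

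In $2^{\Omega}$ the closure of a point $z$ is $\{w : w\subseteq z\}$. So a point of $Z_T$ has nontrivial closure in $Z_T$ only if it contains another point of $Z_T$. In your $Z_T$ no two points are comparable:
\begin{itemize}
\item $p_s=\chi_{\{t\subseteq s\}\cup\{\underline{s}\}}$ is finite and is the only point containing $\underline{s}$;
\item $q_x=\chi_{\{x|_n:n\in\omega\}}$ is infinite and contains no underlined coordinate.
\end{itemize}
Hence $Z_T$ is $T_1$, and in fact much more. Each $\{p_s\}=\{z\in Z_T:\underline{s}\in z\}$ is clopen in $Z_T$. The traces $\{z\in Z_T:u\in z\}$, for $u\in\omega^{<\omega}$, are clopen cones. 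The map
\[
p_s\mapsto (s(0)+1,\dots,s(|s|-1)+1,0,0,\dots),\qquad q_x\mapsto (x(0)+1,x(1)+1,\dots)
\]
is a homeomorphism of $Z_T$ onto a closed subset of $\omega^{\omega}$.

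That $q_x$ is a limit of the isolated points $p_{x|_n}$ is fully compatible with complete metrizability; it breaks neither $T_1$ nor regularity. The subspace topology depends only on the point set, so no tuning of the $\mathbf{\Pi}^0_2$ conditions can repair this. The points themselves have to change.

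The missing idea, which the paper takes from \cite[Section 8]{BKS}, is that every branch must produce two distinct \emph{comparable} points. The paper uses
\[
\mathrm{RI}(\prec_T)=\{I_x:x\in\omega^{\omega}\}\cup\{J_x:x\in[T]\},\qquad I_x=\{s,\underline{s}:s\subseteq x\},\quad J_x=\{s:s\subseteq x\}.
\]
\begin{itemize}
\item The points $I_x$ are present for all $x\in\omega^\omega$, independently of $T$, and form a copy of Baire space. So the space is Polish when $T$ is well-founded.
\item For $x\in[T]$ one has $J_x\subsetneq I_x$, i.e.\ $J_x\in cl(\{I_x\})$. So the space is not $T_1$, hence not Polish.
\end{itemize}
The roles are reversed compared with your attempt. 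The branch point $J_x$ is obtained by \emph{deleting} the underlined coordinates from a point already in the space, rather than arising as a limit of node points.

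The rest of your outline is reasonable: coanalyticity, transfer from $\overline{\mathsf{P}}$ to $\mathsf{P}$, and reducing Borelness to the maps $T\mapsto U_n^T$ and $T\mapsto A_n^T$. Your countable-dense-set idea would plausibly adapt to the paper's space, using $\{I_x:x\in D\}$ for a countable dense $D\subseteq\omega^\omega$. This set is dense in every $\mathrm{RI}(\prec_T)$ because Scott open sets are upward closed and $J_x\subseteq I_x$. But without the correct $Z_T$, no reduction exists.
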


The rest of this section is devoted to a proof of Theorem~\ref{Pi}.

Recall that the set $\mbox{WF}$ of all well-founded trees on $\omega$ is ${\mathbf{\Pi}}^1_1$-complete. Therefore, it suffices to show that there is a Borel reduction from $\mbox{WF}$ to $\overline{\mathsf{P}}$. More precisely, we will define a Borel function $f\colon \mbox{TR}\to F(\overline{\mathbb{Y}})$ so that $\mbox{WF}=f^{-1}(\overline{\mathsf{P}})$.

To define $f$, we use some construction from \cite[Section 8]{BKS}. For each $T\in \mbox{TR}$, we define a transitive binary relation $\prec_T$ on $\Omega$ as follows. For every nonempty $s\in \omega^{<\omega}$, define
\begin{enumerate}
\vspace{-4pt}	\item $s^{-}\prec_T^1 s$;
\vspace{-4pt}	\item $\underline{s^{-}}\prec_T^2 \underline{s}$;
\vspace{-4pt}	\item $s\prec_T^3 \underline{s}$;
\vspace{-4pt}	\item $\underline{s^{-}}\prec_T^4 s$, whenever $s\notin T$.
\end{enumerate}
\vspace{-4pt}The relation $\prec_T$ is defined as the transitive closure of the four relations $\prec_T^1, \prec_T^2, \prec_T^3, \prec_T^4$.

It was shown in the proof of \cite[Theorem 8]{BKS} that, for each tree $T\in\mbox{TR}$, 
$$\mbox{RI}(\prec_T)=\big\{I_x:x\in \omega^{\omega}\big\}\cup \big\{J_x:x\in [T]\big\},$$ 
where $I_x=\{s,\underline{s}: s\in\omega^{<\omega}, s\subseteq x\}$ and $J_x=\{s\in \omega^{\textless \omega}:s\subseteq x\}$. Furthermore, $\mbox{RI}(\prec_T)$ is a $\mathbf{\Pi}^0_2$-subset of $2^{\Omega}$ by the following computation:
$$\mbox{RI}(\prec_T)=\big(2^{\Omega}\backslash \{\varnothing\}\big)\cap \left( \bigcap_{\sigma \in \Omega}\bigg(U_{\sigma}^c\cup \bigcap_{\rho\in \da_{\prec_T}\sigma}U_\rho\bigg)\right)\cap \left(\bigcap_{\sigma,\tau\in \Omega \times \Omega}\bigg( (U_\sigma\cap U_{\tau})^c\cup \bigcup_{\rho\in \ua_{\prec_T}\sigma \cap \ua_{\prec_T}\tau}U_\rho\bigg)\right),$$
where $U_{\sigma}=\{I\in 2^{\Omega}: \sigma \in I\}$ for each $\sigma\in \Omega$. It is immediate that $U_{\sigma}$ is an open subset of $2^{\Omega}$.

Let $O\subseteq \omega$ be the set of all odd numbers and $E\subseteq \omega$ be the set of all positive even numbers. Fix two bijections $h_1,h_2$ from $\Omega \times \Omega$ to $O$ and $E$, respectively. For each $T\in \mbox{TR}$, define an open subset $U_n^T$ and a closed subset $A_n^T$ of $2^{\Omega}$ for each $n\in \omega$ as follows.
\begin{enumerate}
	\item For $n=0$, let $U_0^T=2^{\Omega}\backslash \{\varnothing\}$ and $A_0^T=\varnothing$;
	\item For $n\in O$, let $(\sigma,\tau)\in \Omega \times \Omega$ be such that $n=h_1(\sigma,\tau)$. If $\sigma\prec_T \tau$, let $U_n^T=U_\sigma$ and $A_n^T=U_{\tau}^c$. Otherwise, let $U_n^T=2^{\Omega}$ and $A_n^T=2^{\Omega}$;
	\item For $n\in E$, let $(\sigma,\tau)\in \Omega \times \Omega$ be such that $n=h_2(\sigma,\tau)$. Define $U_n^T=\bigcup_{\rho\in \ua_{\prec_T}\sigma\cap \ua_{\prec_T}\tau}U_\rho$ and $A_n^T=(U_{\sigma}\cap U_\tau)^c$.
\end{enumerate}
It is easy to see that $\mbox{RI}(\prec_T)=\bigcap_{n\in \omega}(U_n^T\cup A_n^T)$. 

We are now ready to define $f\colon \mbox{TR}\to F(\overline{\mathbb{Y}})$. For any $T\in \mbox{TR}$, we define
$$f(T)=\eta(\mbox{RI}(\prec_T)),$$
where $\eta(I)=(I,U_n^T,I,A_n^T)_{n\in \omega}$ for each $I\in \mbox{RI}(\prec_T)$. 

As in the proof of Theorem \ref{closed}, $f$ is well defined. 

By the proof of \cite[Theorem 8]{BKS}, we have that $f(T)\in \overline{\mathsf{P}}$ if and only if $T\in \mbox{WF}$. 

To complete the proof of Theorem~\ref{Pi}, it remains to verify that $f$ is Borel. For this we first prove some lemmas and a proposition.

\begin{lemma}\label{subbasis}
	 Let $F_i\subseteq_{fin}2^{\Omega}$, $\mathcal{U}_i\in \Lambda(\mathcal{O}(2^{\Omega}))$, $\mathcal{V}_i\in \Lambda(F(2^{\Omega}))$, and $A_{F_i}=\ua F_i=\{B\in 2^{\Omega}: F_i\subseteq B\}$. Let $P_i$ denote the $i$-th projection map. Let $f$ be defined above. If 
	 $$f^{-1}\left(\diamondsuit \bigg(\bigcap_{i\leq n}P_i^{-1}(A_{F_i}\times \mathcal{U}_i\times A_{F_i}\times \mathcal{V}_i)\bigg)\right)\neq\varnothing, $$then for all $n\in \omega$, 
	 $$f^{-1}\left(\diamondsuit \bigg(\bigcap_{i\leq n}P_i^{-1}(A_{F_i}\times \mathcal{U}_i\times A_{F_i}\times \mathcal{V}_i)\bigg)\right)=f^{-1}\left(\bigcap_{i\leq n}\diamondsuit \Big(P_i^{-1}(A_{F_i}\times \mathcal{U}_i\times A_{F_i}\times \mathcal{V}_i)\Big)\right) .$$
\end{lemma}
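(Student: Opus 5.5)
We will prove the two inclusions separately. Throughout, write $W=\bigcap_{i\leq n}P_i^{-1}(A_{F_i}\times \mathcal{U}_i\times A_{F_i}\times \mathcal{V}_i)$ and $W_i=P_i^{-1}(A_{F_i}\times \mathcal{U}_i\times A_{F_i}\times \mathcal{V}_i)$, so that $W=\bigcap_{i\le n}W_i$. Then $\diamondsuit W\subseteq \diamondsuit W_i$ for each $i$, because any $F$ meeting $W$ meets every $W_i$. Hence $\diamondsuit W\subseteq\bigcap_{i\le n}\diamondsuit W_i$, and taking preimages under $f$ gives the inclusion ``$\subseteq$''. This direction is purely formal and does not use the non-emptiness hypothesis.

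For ``$\supseteq$'', the key observation is that the conditions defining $W_i$ split into two kinds. The conditions on the $\mathcal{O}$- and $F$-coordinates depend only on the tree $T$, not on the point of $f(T)$. The conditions on the $2^\Omega$-coordinates are just $F_i\subseteq I$. Concretely, a point of $f(T)$ has the form $\eta(I)=(I,U_m^T,I,A_m^T)_{m\in\omega}$ with $I\in \mathrm{RI}(\prec_T)$. Hence $\eta(I)\in W_i$ if and only if $F_i\subseteq I$, $U_i^T\in\mathcal{U}_i$ and $A_i^T\in\mathcal{V}_i$.

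So take $T\in f^{-1}(\bigcap_{i\le n}\diamondsuit W_i)$. For each $i\le n$ there is some $I^{(i)}\in \mathrm{RI}(\prec_T)$ with $F_i\subseteq I^{(i)}$, $U_i^T\in\mathcal U_i$ and $A_i^T\in\mathcal V_i$. We then need a single rounded ideal $I$ with $\bigcup_{i\le n}F_i\subseteq I$. Given such an $I$, the element $\eta(I)$ lies in $W\cap f(T)$, so $T\in f^{-1}(\diamondsuit W)$.

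The main obstacle is producing this common ideal, since rounded ideals of $\prec_T$ are not closed under unions. This is where the non-emptiness hypothesis enters. It gives some tree $T_0$ and some $J\in\mathrm{RI}(\prec_{T_0})$ containing $\bigcup_{i\le n}F_i$. From the description of $\mathrm{RI}(\prec_{T_0})$ we get $J=I_x$ or $J=J_x$ for some $x\in\omega^\omega$. In either case $\bigcup_i F_i\subseteq I_x=\{s,\underline{s}:s\subseteq x\}$. Since $I_x\in\mathrm{RI}(\prec_T)$ for every tree $T$, we may take $I=I_x$ in the previous paragraph, and the required common point is obtained.

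It remains to check two details. First, the characterization $\mathrm{RI}(\prec_T)=\{I_x:x\in\omega^\omega\}\cup\{J_x:x\in[T]\}$ from \cite{BKS} holds uniformly for all $T$, so $I_x$ is a rounded ideal for every tree. Second, $J_x\subseteq I_x$, so the containment $\bigcup_i F_i\subseteq I_x$ holds in the $J_x$ case as well. If the statement instead intends $f^{-1}(\diamondsuit W)\neq\varnothing$ only for the given $T$, the same argument applies verbatim with $T_0=T$.
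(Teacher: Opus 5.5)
Your proof is correct and follows essentially the same route as the paper. The inclusion $\subseteq$ is formal. For $\supseteq$, you use the non-emptiness hypothesis to obtain a single ideal $I\in\mathrm{RI}(\prec_{T'})$ containing every $F_i$, enlarge it to $I_x\supseteq I$ (which lies in $\mathrm{RI}(\prec_T)$ for every tree $T$), and note that the coordinates $U_i^T, A_i^T$ depend only on $T$, so membership of $T$ in the right-hand side already supplies them. Your closing remark about a ``per-$T$'' reading of the hypothesis is unnecessary, since if $T$ itself lies in the left-hand set there is nothing to prove, but it does no harm.
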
	 
\begin{proof}
	 	The inclusion from left to right is immediate. 
	 	
	 	For the converse, let
	 	$
	 	T\in f^{-1}\big(\bigcap_{i\leq n}\diamondsuit P_i^{-1}(A_{F_i}\times \mathcal{U}_i\times A_{F_i}\times \mathcal{V}_i)\big).
	 	$
	 	Then
	$$
	 		f(T)
	 		=\eta(\mbox{RI}(\prec_T))
	 		\in
	 		\bigcap_{i\leq n}
	 		\diamondsuit
	 		P_i^{-1}
	 		\left(
	 		A_{F_i}\times\mathcal{U}_i\times A_{F_i}\times\mathcal{V}_i
	 		\right),
	 		$$
	 		which means for all $i\leq n$, 
	 		\[\eta(\mbox{RI}(\prec_T))
	 		\cap
	 		P_i^{-1}
	 		\left(
	 		A_{F_i}\times\mathcal{U}_i\times A_{F_i}\times\mathcal{V}_i
	 		\right)
	 		\neq\varnothing.
	 		\]
	It follows that, for each $i\leq n$, there exists some $I_i\in \mbox{RI}(\prec_T)$ such that
	 $
	 		\eta(I_i)
	 		\in
	 		P_i^{-1}(
	 		A_{F_i}\times\mathcal{U}_i\times
	 		A_{F_i}\times\mathcal{V}_i
	 		).
	 $
	 	Hence,
	 	$
	 		P_i(\eta(I_i))
	 		=(I_i,U_i^T,I_i,A_i^T)
	 		\in
	 		A_{F_i}\times\mathcal{U}_i\times
	 		A_{F_i}\times\mathcal{V}_i.
	 $
	 	By our assumption,
	 	$f^{-1}\big(\diamondsuit (\bigcap_{i\leq n}P_i^{-1}(A_{F_i}\times \mathcal{U}_i\times A_{F_i}\times \mathcal{V}_i))\big)\neq \varnothing.$
	 	Thus, we can choose
	 	$
	 	T'\in
	 	f^{-1}(\diamondsuit (\bigcap_{i\leq n}P_i^{-1}(A_{F_i}\times \mathcal{U}_i\times A_{F_i}\times \mathcal{V}_i))).
	 	$
	 	Then
	 	$
	 	\eta(\mbox{RI}(\prec_{T'}))\cap
	 	\bigcap_{i\leq n}
	 	P_i^{-1}\left(
	 	A_{F_i}\times\mathcal{U}_i\times A_{F_i}\times\mathcal{V}_i
	 	\right)
	 	\neq\varnothing.
	 	$
	 	Therefore, we can choose some $I\in \mbox{RI}(\prec_{T'})$ such that
	 	$
	 	\eta(I)\in
	 	\bigcap_{i\leq n}
	 	P_i^{-1}\left(
	 	A_{F_i}\times\mathcal{U}_i\times A_{F_i}\times\mathcal{V}_i
	 	\right).
	 	$
	 	This implies that for each $i\leq n$,
	 	$
	 	P_i(\eta(I))
	 	=(I,U_i^{T'},I,A_i^{T'})
	 	\in
	 	A_{F_i}\times\mathcal{U}_i\times A_{F_i}\times\mathcal{V}_i.
	 	$
	 	In particular, by the construction of $A_{F_i}$, we have $F_i\subseteq I$ for each $i\leq n$.
	 	
	 	By the construction of $\mbox{RI}(\prec_{T'})$, we know that
	 	$I=J_x$ or $I=I_x$
	 	for some $x\in\omega^{<\omega}$. Since $J_x\subseteq I_x$, it follows that $I_x\in\bigcap_{i\leq n}A_{F_i}$.
	 	Note that $I_x\in \mbox{RI}(\prec_T)$. Therefore, for each $i\leq n$,
	 	$
	 	P_i(\eta(I_x))
	 	=(I_x,U_i^T,I_x,A_i^T)
	 	\in
	 	P_i(\eta(\mbox{RI}(\prec_T)))
	 	=P_i(f(T)).$
	 	Moreover, since
	 	$
	 	P_i(\eta(I_i))
	 	=(I_i,U_i^T,I_i,A_i^T)
	 	\in
	 	A_{F_i}\times\mathcal{U}_i\times A_{F_i}\times\mathcal{V}_i,
	 	$
	 	we have
	 	$
	 	U_i^T\in\mathcal{U}_i$
	 	and
	 	$A_i^T\in\mathcal{V}_i
	 $
	 	for each $i\leq n$. Consequently, for all $i\leq n$, we have
	 	$$
	 	P_i(\eta(I_x))
	 	=(I_x,U_i^T,I_x,A_i^T)
	 	\in
	 	\left(
	 	A_{F_i}\times\mathcal{U}_i\times A_{F_i}\times\mathcal{V}_i
	 	\right)
	 	\cap P_i(f(T)),
	 	$$
	 	It follows that
	 	$
	 	\eta(I_x)\in
	 	\bigcap_{i\leq n}
	 	P_i^{-1}\left(
	 	A_{F_i}\times\mathcal{U}_i\times A_{F_i}\times\mathcal{V}_i
	 	\right)
	 	\cap f(T).
	 	$
	 	Hence,
	 	$
	 	f(T)\cap
	 	\bigcap_{i\leq n}
	 	P_i^{-1}(
	 	A_{F_i}\times\mathcal{U}_i\times A_{F_i}\times\mathcal{V}_i
	 	)
	 	\neq\varnothing.
	 	$
	 	Equivalently,
	 	$
	 	f(T)\in
	 	\diamondsuit(
	 	\bigcap_{i\leq n}
	 	P_i^{-1}\big(
	 	A_{F_i}\times\mathcal{U}_i\times A_{F_i}\times\mathcal{V}_i
	 	)
	 	\big).
	 	$
	 	Therefore,
	 	$
	 	T\in
	 	f^{-1}\big(\diamondsuit (\bigcap_{i\leq n}P_i^{-1}(A_{F_i}\times \mathcal{U}_i\times A_{F_i}\times \mathcal{V}_i))\big),
	 	$
	 	which yields the desired result.
	 \end{proof}

\begin{lemma}\label{cap}
 Let $F\subseteq_{fin}2^{\Omega},(\mathcal{U}_i)_{i\in \omega}\in \Lambda(\mathcal{O}(2^{\Omega})),\mathcal{V}\in \Lambda(F(2^{\Omega}))\}$, and $A_{F}=\ua F=\{B\in 2^{\Omega}: F\subseteq B\}$. Let $P_i$ denotes the $i$-th projection map.
 Let $f$ be defined above. Then for all $n\in \omega$,
 $$f^{-1}\left(\diamondsuit \bigg( P_n^{-1}\Big(A_{F}\times \bigcap_{i\in \omega}\mathcal{U}_i\times A_{F}\times \mathcal{V}\Big)\bigg)\right)=f^{-1}\left(\bigcap_{i\in \omega}\diamondsuit \big(P_n^{-1}(A_{F}\times \mathcal{U}_i\times A_{F}\times \mathcal{V})\big)\right).$$  
 \end{lemma}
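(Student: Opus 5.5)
The inclusion from left to right is immediate. If $f(T)$ meets $P_n^{-1}(A_F\times\bigcap_{i}\mathcal{U}_i\times A_F\times\mathcal{V})$, then the same witness meets each larger set $P_n^{-1}(A_F\times\mathcal{U}_i\times A_F\times\mathcal{V})$. Hence $f(T)$ lies in every $\diamondsuit P_n^{-1}(A_F\times\mathcal{U}_i\times A_F\times\mathcal{V})$.

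For the converse, the plan is to exploit a key structural feature of $f(T)=\eta(\mbox{RI}(\prec_T))$: every point of $f(T)$ has the same $n$-th second and fourth coordinates, namely $U_n^T$ and $A_n^T$. Only the first and third coordinates (both equal to $I$) vary.

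Fix $T$ in the right-hand side. For each $i$ choose $I^{(i)}\in\mbox{RI}(\prec_T)$ with
\[
(I^{(i)},U_n^T,I^{(i)},A_n^T)\in A_F\times\mathcal{U}_i\times A_F\times\mathcal{V}.
\]
This yields two facts. First, $U_n^T\in\mathcal{U}_i$ for every $i$, so $U_n^T\in\bigcap_i\mathcal{U}_i$. Second, $A_n^T\in\mathcal{V}$. Moreover, taking any single index (say $i=0$), the point $I:=I^{(0)}$ belongs to $\mbox{RI}(\prec_T)$ and satisfies $F\subseteq I$, i.e.\ $I\in A_F$. Therefore $\eta(I)\in f(T)$ and
\[
P_n(\eta(I))=(I,U_n^T,I,A_n^T)\in A_F\times\bigcap_i\mathcal{U}_i\times A_F\times\mathcal{V}.
\]
So $f(T)\cap P_n^{-1}(A_F\times\bigcap_i\mathcal{U}_i\times A_F\times\mathcal{V})\neq\varnothing$, which means $T$ lies in the left-hand side.

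The only subtle point is to notice that the second and fourth coordinates do not depend on the chosen witness $I^{(i)}$. This is what lets a single witness serve for all $i$ simultaneously. In contrast with Lemma~\ref{subbasis}, here only the one coordinate $n$ is involved, so no passage to $I_x$ and no nonemptiness hypothesis is needed. The case where the right-hand side is empty is trivial, and the edge case $F=\varnothing$ needs no separate treatment. I expect no genuine obstacle beyond writing this carefully.
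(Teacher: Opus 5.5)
Your proposal is correct and follows essentially the same route as the paper. Like the paper, you extract $U_n^T\in\bigcap_i\mathcal{U}_i$ and $A_n^T\in\mathcal{V}$ from the witnesses $I^{(i)}$, using that these coordinates of $\eta(I)$ do not depend on $I$. You then reuse a single witness to show that $f(T)$ meets $P_n^{-1}(A_F\times\bigcap_i\mathcal{U}_i\times A_F\times\mathcal{V})$; the paper takes $i=1$ where you take $i=0$.
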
 
 \begin{proof}
 The inclusion from left to right is immediate. 
 
 For the converse, let
 $
 T\in f^{-1}(\bigcap_{i\in\omega}\diamondsuit P_n^{-1}(A_F\times\mathcal{U}_i\times A_F\times\mathcal{V})).
 $
 Then
 $
 f(T)=\eta(\mbox{RI}(\prec_T))\in\bigcap_{i\in\omega}\diamondsuit P_n^{-1}\left(A_F\times\mathcal{U}_i\times A_F\times\mathcal{V}\right).
 $
 Thus, for each $i\in\omega$,
 $
 \eta(\mbox{RI}(\prec_T))\in\diamondsuit P_n^{-1}\left(A_F\times\mathcal{U}_i\times A_F\times\mathcal{V}\right).
 $
 It follows that for each $i\in\omega$, there exists $I_i\in \mbox{RI}(\prec_T)$ such that
 $
 P_n(\eta(I_i))=(I_i,U_n^T,I_i,A_n^T)\in A_F\times\mathcal{U}_i\times A_F\times\mathcal{V}.
 $
 Consequently,
 $
 U_n^T\in\bigcap_{i\in\omega}\mathcal{U}_i
 $ and $
 A_n^T\in\mathcal{V}.
 $
 Taking $i=1$, we obtain
 $
 P_n(\eta(I_1))
 =(I_1,U_n^T,I_1,A_n^T)
 \in
 A_F\times\bigcap_{i\in\omega}\mathcal{U}_i\times A_F\times\mathcal{V}.
 $
 Therefore,
 $$
 \eta(I_1)
 \in
 P_n^{-1}\Big(A_F\times\bigcap_{i\in\omega}\mathcal{U}_i\times A_F\times\mathcal{V}\Big)
 \cap\eta(\mbox{RI}(\prec_T))
 $$
 and hence
 $
 f(T)\cap
 P_n^{-1}\left(A_F\times\bigcap_{i\in\omega}\mathcal{U}_i\times A_F\times\mathcal{V}\right)
 \neq\varnothing.
 $
 Thus,
 $
 f(T)\in
 \diamondsuit P_n^{-1}\left(A_F\times\bigcap_{i\in\omega}\mathcal{U}_i\times A_F\times\mathcal{V}\right).
 $
 Consequently,
 $
 T\in
 f^{-1}\left(
 \diamondsuit P_n^{-1}\left(A_F\times\bigcap_{i\in\omega}\mathcal{U}_i\times A_F\times\mathcal{V}\right)
 \right),
 $
 which proves the desired inclusion.
 \end{proof}

\begin{proposition}\label{Borel}
Let $f$ be defined above. Then $f$ is Borel if and only if 
$$f^{-1}\big(\diamondsuit P_n^{-1}(A_F\times \mathcal{U}\times A_F\times \mathcal{V})\big)$$ is a Borel set for every $F\subseteq_{fin}2^{\Omega},\mathcal{U}\in \Sigma(\mathcal{O}(2^{\Omega})),\mathcal{V}\in \Lambda(F(2^{\Omega}))$, $n\in \omega$, where $A_{F}=\ua F=\{B\in 2^{\Omega}: F\subseteq B\}$.
\end{proposition}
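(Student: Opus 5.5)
The forward direction is immediate. Each set $P_n^{-1}(A_F\times\mathcal{U}\times A_F\times\mathcal{V})$ is $\mathbf{\Pi}^0_2$ in $\overline{\mathbb{X}}$. Indeed, $A_F=\ua F$ is $G_\delta$ in $2^{\Omega}$ by Lemma~\ref{lower}, and Scott open sets are Lawson open. Hence $\diamondsuit$ of such a set is Borel in $F(\overline{\mathbb{Y}})$: it is the projection of a Borel set with sections that are relatively simple, and in any case it lies in the Borel $\sigma$-algebra generated by the lower Vietoris topology. We will check this by writing $A_F$ as a decreasing intersection of basic Scott opens $\ua k$, $k\in K$, which is countable. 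So the real content is the converse.

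For the converse, we will use that $F(\overline{\mathbb{Y}})$ with the lower Vietoris topology is quasi-Polish, hence countably based. Its Borel $\sigma$-algebra is therefore generated by $\diamondsuit W$ with $W$ ranging over a countable base of $\overline{\mathbb{Y}}$. So $f$ is Borel as soon as $f^{-1}(\diamondsuit W)$ is Borel for every basic open $W$ of $\overline{\mathbb{Y}}$. A basic open set of $\overline{\mathbb{X}}$ has the form $\bigcap_{i\le n}P_i^{-1}(B_i\times\mathcal{U}_i\times B_i'\times\mathcal{V}_i)$. Here $B_i,B_i'$ are basic Scott opens of $2^{\Omega}$, i.e.\ of the form $\ua F$ for finite $F$, and on $\overline{\mathbb{Y}}$ the coordinates $x_i=x_i'$ all coincide. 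Thus, after intersecting with $\overline{\mathbb{Y}}$, we may take a single $F$, or the $F_i$, and the same set in both slots, reducing to the sets of Lemma~\ref{subbasis}. The Lawson basic opens $\mathcal{U}_i$ are finite intersections of a Scott open with complements of principal filters. By Lemma~\ref{lower}, these are countable Boolean combinations of Scott opens, and likewise for $\mathcal{V}_i$.

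The key steps in order are as follows. First, apply Lemma~\ref{subbasis}. Either the preimage $f^{-1}(\diamondsuit\bigcap_{i\le n}\cdots)$ is empty, and hence trivially Borel, or it equals $\bigcap_{i\le n}f^{-1}(\diamondsuit P_i^{-1}(\cdots))$. This reduces everything to single-coordinate sets $f^{-1}(\diamondsuit P_n^{-1}(A_F\times\mathcal{U}\times A_F\times\mathcal{V}))$ with $\mathcal{U}$ Lawson open. Second, reduce Lawson-open $\mathcal{U}$ to Scott-open $\mathcal{U}$. Write a basic Lawson open as $\mathcal{U}'\setminus\bigcup_{j}\ua O_j$, with finitely many $j$ and $\mathcal{U}'$ Scott open. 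Expand each $\ua O_j$ as a countable intersection of Scott opens $\ua k$, as in Lemma~\ref{lower}. Then $\mathcal{U}$ becomes a countable union of countable intersections of sets built from Scott opens. Countable unions commute with $f^{-1}\diamondsuit$ trivially, since $\diamondsuit$ preserves unions. Countable intersections in the $\mathcal{O}$-coordinate commute by Lemma~\ref{cap}, because the $\mathcal{O}$-coordinate of $\eta(I)$ is the fixed set $U_n^T$ independent of $I$. The same observation handles complements: $f(T)\in\diamondsuit P_n^{-1}(A_F\times\mathcal{U}\times A_F\times\mathcal{V})$ iff $U_n^T\in\mathcal{U}$, $A_n^T\in\mathcal{V}$, and $\mathrm{RI}(\prec_T)\cap A_F\neq\varnothing$. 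This factorization is a conjunction, so differences in the $\mathcal{U}$-slot translate into Boolean combinations of preimages of the allowed form. Finally, use that every Lawson open of the countably based $\Lambda(\mathcal{O}(2^\Omega))$ is a countable union of basic ones.

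The main obstacle is the second step: making the passage from Scott-open to Lawson-open $\mathcal{U}$ rigorous. The difficulty is that complements are not preserved by $\diamondsuit$ in general. We will handle this via the factorization above: the condition splits as ``$U_n^T\in\mathcal{U}$'' and ``$\exists I\in\mathrm{RI}(\prec_T)$ with $F\subseteq I$ and $A_n^T\in\mathcal{V}$''. From this factorization, $\{T: U_n^T\in\mathcal{U}\}$ for Lawson $\mathcal{U}$ is obtained from the Scott case by complementation within the Borel set $f^{-1}(\diamondsuit P_n^{-1}(A_F\times\mathcal{O}(2^\Omega)\times A_F\times\mathcal{V}))$.
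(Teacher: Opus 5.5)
Your proof is correct and follows the paper's route: Lemma~\ref{subbasis} splits across coordinates, Lemma~\ref{lower} with Lemma~\ref{cap} handles finitely generated upper sets, and countable unions finish the argument. Your factorization $T\in f^{-1}(\diamondsuit P_n^{-1}(A_F\times\mathcal{U}\times A_F\times\mathcal{V}))\iff U_n^T\in\mathcal{U}\wedge A_n^T\in\mathcal{V}\wedge \mathrm{RI}(\prec_T)\cap A_F\neq\varnothing$ is exactly what justifies the complement step (taking complements inside the Borel set obtained with $\mathcal{U}=\mathcal{O}(2^{\Omega})$); the paper's Claim~2 passes over this when it cites Claim~1, which is proved only for $\mathcal{R}$, to get the $\mathcal{R}^c$ case.

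One small correction to the forward direction: $F$ is meant to be a finite subset of $\Omega$, so $A_F$ is a basic Scott open set and $\diamondsuit P_n^{-1}(\cdots)$ is already open in the lower Vietoris topology. Your $G_\delta$ detour is therefore unnecessary, and as sketched it would not be justified, since $\diamondsuit$ does not commute with intersections in general.
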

\begin{proof}
The implication from left to right is immediate. For the converse, let $F\subseteq_{fin}2^{\Omega},k'\in \omega,\mathcal{V}\in \Lambda(F(2^{\Omega}))$.

\textbf{Claim 1.} The set $f^{-1}(\diamondsuit P_n^{-1}(A_F\times\mathcal{U}\times A_F\times\mathcal{V}))$ is Borel for every $\mathcal{U}=\ua\{U_k:k\leq k'\}, U_k\in \mathcal{O}(2^{\Omega})$.

	
Since $\mathcal{O}(2^{\Omega})$ is an $\omega$-algebraic domain, $\mathcal{U}$ is $G_{\delta}$ in the Scott topology of $\mathcal{O}(2^{\Omega})$ by Lemma \ref{lower}. Hence, $\mathcal{U}=\bigcap_{i\in \omega}\mathcal{U}_i$, where $\mathcal{U}_i\in \Sigma(\mathcal{O}(2^{\Omega}))$ for each $i\in \omega$. By Lemma \ref{cap}, we have 
$$\begin{array}{rcl} f^{-1}(\diamondsuit P_n^{-1}(A_F\times \mathcal{U}\times A_F\times \mathcal{V}))&=&f^{-1}(\diamondsuit P_n^{-1}(A_{F}\times \bigcap_{i\in \omega}\mathcal{U}_i\times A_{F}\times \mathcal{V})) \\
&=&f^{-1}(\bigcap_{i\in \omega}\diamondsuit P_n^{-1}(A_{F}\times \mathcal{U}_i\times A_{F}\times \mathcal{V})).
\end{array}$$ By our assumption, $f^{-1}(\diamondsuit P_n^{-1}(A_{F}\times \mathcal{U}_i\times A_{F}\times \mathcal{V}))$ is Borel for each $i\in \omega$. It follows that $$f^{-1}(\bigcap_{i\in \omega}\diamondsuit P_n^{-1}(A_{F}\times \mathcal{U}_i\times A_{F}\times \mathcal{V}))=\bigcap_{i\in \omega}f^{-1}(\diamondsuit P_n^{-1}(A_{F}\times \mathcal{U}_i\times A_{F}\times \mathcal{V}))$$ is Borel. Therefore, $f^{-1}(\diamondsuit P_n^{-1}(A_F\times \mathcal{U}\times A_F\times \mathcal{V}))$ is Borel.
This proves Claim 1.

Let 
$$\mathcal{B}=\Big\{\mathcal{T}\backslash \mathcal{U}'\colon \mathcal{T}\in \sigma(\mathcal{O}(2^{\Omega})),\ \mathcal{U}'=\ua\{U_k:k\leq k'\},\ U_k\in \mathcal{O}(2^{\Omega}),\ k'\in \omega\Big\}. $$




\textbf{Claim 2}: $f^{-1}(\diamondsuit P_n^{-1}(A_F\times \mathcal{G}\times A_F\times \mathcal{V}))$ is Borel for every $\mathcal{G}\in \mathcal{B}$.

By the construction of $\mathcal{B}$, we have $\mathcal{G}=\mathcal{H}\backslash \mathcal{R}$, where $\mathcal{H}\in \sigma(\mathcal{O}(2^{\Omega}))$ and $\mathcal{R}=\ua\{U_k:k\leq k'\}$, with $U_k\in \mathcal{O}(2^{\Omega})$ for some $k'\in \omega$. It follows that
$f^{-1}(\diamondsuit P_n^{-1}(A_F\times \mathcal{G}\times A_F\times \mathcal{V}))=f^{-1}(\diamondsuit P_n^{-1}(A_F\times (\mathcal{H}\cap \mathcal{R}^c)\times A_F\times \mathcal{V}))$.
By Lemma \ref{cap}, we obtain
$$f^{-1}(\diamondsuit P_n^{-1}(A_F\times (\mathcal{H}\cap \mathcal{R}^c)\times A_F\times \mathcal{V}))=f^{-1}(\diamondsuit P_n^{-1}(A_F\times \mathcal{H}\times A_F\times \mathcal{V})\cap \diamondsuit P_n^{-1}(A_F\times \mathcal{R}^c\times A_F\times \mathcal{V}))$$
and hence
$$f^{-1}(\diamondsuit P_n^{-1}(A_F\times (\mathcal{H}\cap \mathcal{R}^c)\times A_F\times \mathcal{V}))=f^{-1}(\diamondsuit P_n^{-1}(A_F\times \mathcal{H}\times A_F\times \mathcal{V}))\cap f^{-1}(\diamondsuit P_n^{-1}(A_F\times \mathcal{R}^c\times A_F\times \mathcal{V})).$$
By Claim 1, $f^{-1}(\diamondsuit P_n^{-1}(A_F\times \mathcal{R}^c\times A_F\times \mathcal{V}))$ is Borel. Furthermore, by our assumption, $f^{-1}(\diamondsuit P_n^{-1}(A_F\times \mathcal{H}\times A_F\times \mathcal{V}))$ is Borel. Therefore,
$f^{-1}(\diamondsuit P_n^{-1}(A_F\times \mathcal{G}\times A_F\times \mathcal{V}))$ 
 is Borel. This proves Claim 2.


Next, let 
$$\mathcal{B}'=\Big\{\bigcap_{i\leq n} P_i^{-1}(A_{F_i}\times \mathcal{W}_i\times A_{F_i}\times \mathcal{V}_i):F_i\subseteq_{fin}2^{\Omega},\ \mathcal{W}_i\in \mathcal{B},\ \mathcal{V}_i\in \Lambda(F(2^{\Omega}))\Big\}. $$ 

\textbf{Claim 3}: $f^{-1}(\diamondsuit \bigcap_{i\leq n} P_i^{-1}(A_{F_i}\times \mathcal{W}_i\times A_{F_i}\times \mathcal{V}_i)))$ is Borel for each $\bigcap_{i\leq n} P_i^{-1}(A_{F_i}\times \mathcal{W}_i\times A_{F_i}\times \mathcal{V}_i)\in \mathcal{B}'$.

If $f^{-1}(\diamondsuit \bigcap_{i\leq n} P_i^{-1}(A_{F_i}\times \mathcal{W}_i\times A_{F_i}\times \mathcal{V}_i)))$ is empty, then it is Borel.

Otherwise, $f^{-1}(\diamondsuit \bigcap_{i\leq n} P_i^{-1}(A_{F_i}\times \mathcal{W}_i\times A_{F_i}\times \mathcal{V}_i)))$ is nonempty. By Lemma \ref{subbasis}, we have
$f^{-1}(\diamondsuit \bigcap_{i\leq n} P_i^{-1}(A_{F_i}\times \mathcal{W}_i\times A_{F_i}\times \mathcal{V}_i)))=f^{-1}(\bigcap_{i\leq n}\diamondsuit  P_i^{-1}(A_{F_i}\times \mathcal{W}_i\times A_{F_i}\times \mathcal{V}_i)))$.
By Claim 3, for each $i\leq n$,
$f^{-1}(\diamondsuit  P_i^{-1}(A_{F_i}\times \mathcal{W}_i\times A_{F_i}\times \mathcal{V}_i)))$
is Borel. Therefore,
$f^{-1}(\diamondsuit \bigcap_{i\leq n} P_i^{-1}(A_{F_i}\times \mathcal{W}_i\times A_{F_i}\times \mathcal{V}_i)))$
is Borel.

\textbf{Claim 4}: $f$ is Borel.

It suffices to show that $f^{-1}(\diamondsuit \mathcal{S})$ is Borel for every $\mathcal{S}\in \mathcal{O}(\overline{\mathbb{Y}})$. For each $\mathcal{S}\in \mathcal{O}(\overline{\mathbb{Y}})$, we have $\mathcal{S}=\bigcup_{i\in \omega}\mathcal{P}_i$, where $\mathcal{P}_i\in \mathcal{B}'$. It follows that
$f^{-1}(\diamondsuit \mathcal{S})=f^{-1}(\diamondsuit \bigcup_{i\in \omega}\mathcal{P}_i)=f^{-1}(\bigcup_{i\in \omega}\diamondsuit \mathcal{P}_i)$.
By Claim 3, $f^{-1}(\diamondsuit \mathcal{P}_i)$ is Borel for every $i\in \omega$. Hence, $f^{-1}(\diamondsuit \mathcal{S})$ is Borel, since it is a countable union of Borel sets.
\end{proof}

With Proposition~\ref{Borel}, we now turn to the verification of the Borelness of $f$. 
To this end, let $F$ be a finite subset of $\Omega$, let $\mathcal{U}$ be a Scott open subset of $\mathcal{O}(2^{\Omega})$, let $\mathcal{V}$ be a Lawson open subset of $F(2^{\Omega})$, and let $n\in\omega$. By Proposition \ref{Borel}, it suffices to show that $f^{-1}(\diamondsuit P_n^{-1}(A_F\times \mathcal{U}\times A_F\times \mathcal{V}))$ is Borel.  We only consider the nontrivial case where $f^{-1}(\diamondsuit P_n^{-1}(A_F\times \mathcal{U}\times A_F\times \mathcal{V}))$ is nonempty.
We distinguish the following three cases.

\textbf{Case 1}: $n=0$. We claim that $f^{-1}(\diamondsuit (P_0^{-1}(A_F\times \mathcal{U}\times A_F\times \mathcal{V})))=\mbox{TR}$ whenever it is nonempty.

Assume $f^{-1}(\diamondsuit (P_0^{-1}(A_F\times \mathcal{U}\times A_F\times \mathcal{V})))$ is nonempty. Pick $\overline{T}\in f^{-1}(\diamondsuit (P_0^{-1}(A_F\times \mathcal{U}\times A_F\times \mathcal{V})))$. Then
$P_0(f(\overline{T}))=P_0(\eta(\mbox{RI}(\prec_{\overline{T}})))\cap (A_F\times \mathcal{U}\times A_F\times \mathcal{V})\neq \varnothing$.
Hence, there exists $I\in \mbox{RI}(\prec_{\overline{T}})$ such that
\(P_0(\eta(I))=(I,U_0^{\overline{T}},I,A_0^{\overline{T}})=(I,2^{\Omega}\backslash \{\varnothing\},I,\varnothing)\in A_F\times \mathcal{U}\times A_F\times \mathcal{V}.\)

If $I=I_x$ for some $x\in \omega^{\omega}$, then $I_x\in \mbox{RI}(\prec_{T})$ for every $T\in \mbox{TR}$. Moreover,
$P_0(\eta(I_x))=(I,2^{\Omega}\backslash \{\varnothing\},I,\varnothing)\in P_0(\eta(\mbox{RI}(\prec_{T})))\cap (A_F\times \mathcal{U}\times A_F\times \mathcal{V})$ for each $T\in \mbox{TR}$.
Therefore,
$\mbox{TR}\subseteq f^{-1}(\diamondsuit (P_0^{-1}(A_F\times \mathcal{U}\times A_F\times \mathcal{V})))$.
Thus,
$f^{-1}(\diamondsuit (P_0^{-1}(A_F\times \mathcal{U}\times A_F\times \mathcal{V})))=\mbox{TR}$.

Otherwise, $I=J_x$ for some $x\in [\overline{T}]$. Then $J_x\subseteq I_x$ and $I_x\in \mbox{RI}(\prec_{T})$ for every $T\in \mbox{TR}$. Since $I=J_x\in A_F$, we have $I_x\in A_F$. Consequently,
$P_0(\eta(I_x))=(I_x,2^{\Omega}\backslash \{\varnothing\},I_x,\varnothing)\in P_0(\eta(\mbox{RI}(\prec_{T})))\cap (A_F\times \mathcal{U}\times A_F\times \mathcal{V})$ for each $T\in \mbox{TR}$.
It follows that
$f^{-1}(\diamondsuit (P_0^{-1}(A_F\times \mathcal{U}\times A_F\times \mathcal{V})))=\mbox{TR}$.

\textbf{Case 2}: $n\in O$. Then there exists $(\sigma,\tau)\in \Omega\times \Omega$ such that $h_1(\sigma,\tau)=n$. We only consider the nontrivial case where $f^{-1}(\diamondsuit P_n^{-1}(A_F\times \mathcal{U}\times A_F\times \mathcal{V}))$ is nonempty. We consider two subcases each of which is split into more sub-subcases, etc.

Case 2.1: $\exists~ \overline{T}\in f^{-1}(\diamondsuit (P_n^{-1}(A_F\times \mathcal{U}\times A_F\times \mathcal{V})))$ such that $\tau\prec_{\overline{T}}\sigma$. 

Case 2.1.1: $\prec_{\overline{T}}$ belongs to the transitive closure of $\prec_{\overline{T}}^1,\prec_{\overline{T}}^2,\prec_{\overline{T}}^3$. Then $\tau\prec_T \sigma$ for each $T\in \mbox{TR}$. Hence, $U_n^T=U_\tau$ and $A_n^T=U_{\sigma}^c$ for each $T\in TR$. We claim that $f^{-1}(\diamondsuit P_n^{-1}(A_F\times \mathcal{U}\times A_F\times \mathcal{V}))=\mbox{TR}$.

Since $\overline{T}\in f^{-1}(\diamondsuit P_n^{-1}(A_F\times \mathcal{U}\times A_F\times \mathcal{V}))$, there exists $I\in \mbox{RI}(\prec_{\overline{T}})$ such that
$P_n(\eta(I))=(I,U_n^{\overline{T}},I,A_n^{\overline{T}})=(I,U_\tau,I,U_{\sigma}^c)\in A_F\times \mathcal{U}\times A_F\times \mathcal{V}$.
Note that $I=I_x$ or $I=J_x$ for some $x\in \omega^{\omega}$. In either case, there exists $x\in \omega^{\omega}$ such that $I\subseteq I_x$ and $I_x\in \mbox{RI}(\prec_T)$ for each $T\in \mbox{TR}$. It follows that
$P_n(\eta(I_x))\in P_n(\eta(\mbox{RI}(\prec_T)))\cap (A_F\times \mathcal{U}\times A_F\times \mathcal{V})$
for each $T\in \mbox{TR}$. Therefore,
$\mbox{TR}\subseteq f^{-1}(\diamondsuit P_n^{-1}(A_F\times \mathcal{U}\times A_F\times \mathcal{V}))$.
Consequently,
$f^{-1}(\diamondsuit P_n^{-1}(A_F\times \mathcal{U}\times A_F\times \mathcal{V}))=\mbox{TR}$.

Case 2.1.2: $\prec_{\overline{T}}$ does not belong to the transitive closure of $\prec_{\overline{T}}^1,\prec_{\overline{T}}^2,\prec_{\overline{T}}^3$. That is, $\prec_{\overline{T}}$ must be generated by $\prec_{\overline{T}}^4$ as a member. By inspecting the above four relations, we obtain that, for each $T\in \mbox{TR}$, $\tau\prec_T\sigma$ ($\prec_{T}$ must be generated by $\prec_{T}^4$ as a member) if and only if
\begin{enumerate}
	\item $\tau=\underline{s}$ for some $s\in \omega^{\textless \omega}$;
	\item $\sigma\in \omega^{\textless \omega}$;
	\item $s$ is a strict prefix of $\sigma$;
	\item there exists $k\in \omega$ such that $|s|+1\leq k\leq |\sigma|$ and $\sigma|_k\not\in T$.
\end{enumerate}

Note that $\tau\prec_{\overline{T}}\sigma$ in this case. Then $\tau=\underline{s}$ for some $s\in \omega^{\textless \omega}$, $\sigma\in \omega^{\textless \omega}$, and $s$ is a strict prefix of $\sigma$.

Case 2.1.2.1: If $2^{\Omega}\not\in \mathcal{U}$ or $2^{\Omega}\not\in \mathcal{V}$, then we claim that $f^{-1}(\diamondsuit P_n^{-1}(A_F\times \mathcal{U}\times A_F\times \mathcal{V}))=\bigcup_{|s|+1\leq k\leq |\sigma|}U_{\sigma|_k}^c$.

For any $T\in f^{-1}(\diamondsuit P_n^{-1}(A_F\times \mathcal{U}\times A_F\times \mathcal{V}))$, there exists $I\in \mbox{RI}(\prec_T)$ such that $P_n(\eta(I))=(I,U_n^T,I,A_n^T)\in A_F\times \mathcal{U}\times A_F\times \mathcal{V}$. Assume that $\tau\not\prec_T \sigma$. Then $U_n^T=U_{h_1(\sigma,\tau)}=2^{\Omega}$ and $A_n^T=U_{h_1(\sigma,\tau)}=2^{\Omega}$. It follows that $U_n^T=2^{\Omega}\in \mathcal{U}$ and $A_n^T=2^{\Omega}\in \mathcal{V}$, which contradicts the assumption that $2^{\Omega}\not\in \mathcal{U}$ or $2^{\Omega}\not\in \mathcal{V}$. Hence, $\tau\prec_T \sigma$ for each $T\in f^{-1}(\diamondsuit P_n^{-1}(A_F\times \mathcal{U}\times A_F\times \mathcal{V}))$. Since $\tau=\underline{s}$ for some $s\in \omega^{\textless \omega}$ and $\sigma\in \omega^{\textless \omega}$, we can conclude that $\tau\prec_T \sigma$ must be generated by $\prec_T^4$ as a member. This means that $T\in \bigcup_{|s|+1\leq k\leq |\sigma|}U_{\sigma|_k}^c$.

Conversely, let $T\in \bigcup_{|s|+1\leq k\leq |\sigma|}U_{\sigma|_k}^c$. Then there exists $k\in \omega$ with $|s|+1\leq k\leq |\sigma|$ such that $T\in U_{\sigma|_k}^c$, that is, $\sigma|_k\not\in T$. This means that $\tau=\underline{s}\prec_T^4 \sigma|_{|s|+1}=\sigma|_k$ $(k=|s|+1)$, or
$\tau=\underline{s}\prec_T^2 \underline{\sigma|_{|s|+1}}\prec_T^2 \cdots \prec_T^2\underline{\sigma|_{k-1}}\prec_T^4\sigma|_k$
$(k\geq |s|+2)$. It follows that $\tau\prec_T \sigma$. Consequently, $U_n^T=U^T_{h_1(\sigma,\tau)}=U_s$ and $A_n^T=A^T_{h_1(\sigma,\tau)}=U_{\sigma}^c$.

Since $\tau\prec_{\overline{T}}\sigma$ and $\overline{T}\in f^{-1}(\diamondsuit P_n^{-1}(A_F\times \mathcal{U}\times A_F\times \mathcal{V}))$, there exists $I\in \mbox{RI}(\prec_{\overline{T}})$ such that $(I,U_n^{\overline{T}},I,A_n^{\overline{T}})=(I,U_\tau,I,U_{\sigma}^c)\in A_F\times \mathcal{U}\times A_F\times \mathcal{V}$. Note that $I=I_x$ or $J_x$ for some $x\in \omega^{\omega}$ and $J_x\subseteq I_x$. Hence, we have that for each $T\in \bigcup_{|s|+1\leq k\leq |\sigma|}U_{\sigma|_k}^c$, $(I_x,U_n^T,I_x,A_n^T)=(I_x,U_\tau,I_x,U_{\sigma}^c)\in A_F\times \mathcal{U}\times A_F\times \mathcal{V}$. It follows that
$(I_x,U_n^T,I_x,A_n^T)=(I_x,U_\tau,I_x,U_{\sigma}^c)=P_n(\eta(I_x))\in P_n(\eta(\mbox{RI}(\prec_T)))\cap (A_F\times \mathcal{U}\times A_F\times \mathcal{V})$
for each $T\in \bigcup_{|s|+1\leq k\leq |\sigma|}U_{\sigma|_k}^c$. Therefore,
$\bigcup_{|s|+1\leq k\leq |\sigma|}U_{\sigma|_k}^c\subseteq f^{-1}(\diamondsuit P_n^{-1}(A_F\times \mathcal{U}\times A_F\times \mathcal{V}))$.

We can thus conclude that
$f^{-1}(\diamondsuit P_n^{-1}(A_F\times \mathcal{U}\times A_F\times \mathcal{V}))=\bigcup_{|s|+1\leq k\leq |\sigma|}U_{\sigma|_k}^c$.

Case 2.1.2.2: $2^{\Omega}\in \mathcal{U}$ and $2^{\Omega}\in \mathcal{V}$. We claim that $f^{-1}(\diamondsuit P_n^{-1}(A_F\times \mathcal{U}\times A_F\times \mathcal{V}))=\mbox{TR}$.

Since $\tau\prec_{\overline{T}}\sigma$ and $\overline{T}\in f^{-1}(\diamondsuit P_n^{-1}(A_F\times \mathcal{U}\times A_F\times \mathcal{V}))$, there exists $I\in \mbox{RI}(\prec_{\overline{T}})$ such that $(I,U_n^{\overline{T}},I,A_n^{\overline{T}})=(I,U_\tau,I,U_{\sigma}^c)\in A_F\times \mathcal{U}\times A_F\times \mathcal{V}$. This means that $U_\tau\in \mathcal{U}$ and $U_{\sigma}^c\in \mathcal{V}$.

Note that $I=I_x$ or $I=J_x$ for some $x\in \omega^{\omega}$ and $J_x\subseteq I_x$. Then, for each $T\in \mbox{TR}$, $I_x\in \mbox{RI}(\prec_T)$ and $P_n(\eta(I_x))=(I_x,U_n^T,I_x,A_n^T)$. Since $U_n^T=U_\tau,A_n^T=U_{\sigma}^c$ or $U_n^T=2^{\Omega}, A_n^T=2^{\Omega}$, we have $U_n^T\in \mathcal{U}$ and $A_n^T\in \mathcal{V}$ in both cases, by the assumption that $2^{\Omega}\in \mathcal{U}$ and $2^{\Omega}\in \mathcal{V}$. It follows that $(I_x,U_n^T,I_x,A_n^T)\in A_F\times \mathcal{U}\times A_F\times \mathcal{V}$. Therefore, $P_n(\eta(I_x))\in P_n(\eta(\mbox{RI}(\prec_T)))\cap (A_F\times \mathcal{U}\times A_F\times \mathcal{V})$. Thus, $f^{-1}(\diamondsuit P_n^{-1}(A_F\times \mathcal{U}\times A_F\times \mathcal{V}))=\mbox{TR}$.

Case 2.2: For each $\overline{T}\in f^{-1}(\diamondsuit P_n^{-1}(A_F\times \mathcal{U}\times A_F\times \mathcal{V}))$, we have $\tau\not\prec_{\overline{T}}\sigma$. Then we claim that
$f^{-1}(\diamondsuit P_n^{-1}(A_F\times \mathcal{U}\times A_F\times \mathcal{V}))=\{T\in \mbox{TR}:\tau\not\prec_T \sigma\}=\{T\in \mbox{TR}:\tau\prec_T\sigma\}^c$.

By assumption, the inclusion from the left-hand side to the right-hand side is immediate. For the converse, let $T\in \mbox{TR}$ be such that $\tau\not\prec_T \sigma$. Pick $\overline{T}\in f^{-1}(\diamondsuit P_n^{-1}(A_F\times \mathcal{U}\times A_F\times \mathcal{V}))$. Then we can find $I\in \mbox{RI}(\prec_{\overline{T}})$ such that $P_n(\eta(I))=(I,U_n^{\overline{T}},I,A_n^{\overline{T}})=(I,2^{\Omega},I,2^{\Omega})\in A_F\times \mathcal{U}\times A_F\times \mathcal{V}$. This yields that $2^{\Omega}\in \mathcal{U}$ and $2^{\Omega}\in \mathcal{V}$.

Note that $I=I_x$ or $J_x$ for some $x\in \omega^{\omega}$ and $J_x\subseteq I_x$. Then, for each $T\in \mbox{TR}$ with $\tau\not\prec_T \sigma$, we have
$(I_x,U_n^T,I_x,A_n^T)=(I_x,2^{\Omega},I_x,2^{\Omega})=P_n(\eta(I_x))\in P_n(\eta(\mbox{RI}(\prec_T)))\cap (A_F\times \mathcal{U}\times A_F\times \mathcal{V}))$.
Therefore, we can conclude that
$f^{-1}(\diamondsuit P_n^{-1}(A_F\times \mathcal{U}\times A_F\times \mathcal{V}))=\{T\in \mbox{TR}:\tau\prec_T\sigma\}^c$.

It remains to show that $\{T\in \mbox{TR}:\tau\prec_T\sigma\}$ is Borel.

Case 2.2.1: $\tau\in \omega^{\textless \omega}$ and $\sigma\in \omega^{\textless\omega}$. Pick $\overline{T}\in f^{-1}(\diamondsuit P_n^{-1}(A_F\times \mathcal{U}\times A_F\times \mathcal{V}))$. Then $\tau\not\prec_{\overline{T}}\sigma$. This means that $\tau$ is not a strict prefix of $\sigma$. It follows that $\{T\in \mbox{TR}:\tau\prec_T\sigma\}$ is empty, and hence is Borel.

Case 2.2.2: $\tau\in \omega^{\textless \omega}$ and $\sigma=\underline{s}$, where $s\in \omega^{\textless\omega}$. Since $\tau\not\prec_{\overline{T}}\sigma$, $\tau$ is not a prefix of $s$. It follows that $\{T\in \mbox{TR}:\tau\prec_T\sigma\}$ is empty, and thus is Borel.

Case 2.2.3: $\tau=\underline{t}$, where $t\in \omega^{\textless\omega}$, and $\sigma\in \omega^{\textless\omega}$. If $t$ is not a strict prefix of $\sigma$, then $\{T\in \mbox{TR}:\tau\prec_T\sigma\}$ is empty, and hence, it is Borel. Else, $t$ is a strict prefix of $\sigma$.
By the proof of Case 2.1.2, we have
$\{T\in \mbox{TR}:\tau\prec_{T}\sigma\}=\bigcup_{|t|+1\leq k\leq |\sigma|}U_{\sigma|_k}^c$,
which is Borel.

Case 2.2.4: $\tau=\underline{t}$, where $t\in \omega^{\textless\omega}$, and $\sigma=\underline{s}$, where $s\in \omega^{\textless\omega}$. Since $\tau\not\prec_{\overline{T}}\sigma$, $t$ is not a strict prefix of $s$. Therefore, it follows immediately that $\{T\in \mbox{TR}:\tau\prec_{T}\sigma\}$ is empty.

We can now conclude that $\{T\in \mbox{TR}:\tau\prec_{T}\sigma\}$ is Borel. Hence, $f^{-1}(\diamondsuit P_n^{-1}(A_F\times \mathcal{U}\times A_F\times \mathcal{V}))$ is Borel.

\textbf{Case 3}: $n\in E$. Then there is $(\sigma,\tau)\in \Omega\times \Omega$ such that $h_2(\sigma,\tau)=n$. We only consider the nontrivial case where $f^{-1}(\diamondsuit P_n^{-1}(A_F\times \mathcal{U}\times A_F\times \mathcal{V}))$ is nonempty.

Note that $\mathcal{O}(2^{\Omega})$ is an $\omega$-algebraic domain, and $\big\{\ua\{F_i:i\leq n\}:n\in \omega, F_i\subseteq_{fin}\Omega\big\}$ are compact elements of $\mathcal{O}(2^{\Omega})$, where $\ua\{F_i:i\leq n\}=\{B\in 2^{\Omega}:\exists i\leq n\ F_i\subseteq B\}$. Therefore, $\big\{\ua(\ua\{F_i:i\leq n\}): n\in \omega, F_i\subseteq_{fin}\Omega\big\}$ forms a topological base for the Scott topology on $\mathcal{O}(2^{\Omega})$, where $\ua(\ua\{F_i:i\leq n\})=\{\mathcal{P}\in \mathcal{O}(2^{\Omega}): \ua\{F_i:i\leq n\}\subseteq \mathcal{P}\}$. Thus, without loss of generality, we may assume that $\mathcal{U}=\ua(\ua\{F_i:i\leq n\})$ for some $n\in \omega$, where $F_i\subseteq_{fin}\Omega$ for all $i\leq n$.

Pick $\overline{T}\in f^{-1}(\diamondsuit P_n^{-1}(A_F\times \mathcal{U}\times A_F\times \mathcal{V}))$. Then there exists $I\in \mbox{RI}(\prec_{\overline{T}})$ such that 
$$P_n(\eta(I))=(I,U_n^{\overline{T}},I,A_n^{\overline{T}})=\left(I,\bigcup_{\rho\in \ua_{\prec_{\overline{T}}}\sigma\cap \ua_{\prec_{\overline{T}}}\tau}U_\rho,I,(U_\sigma\cap U_{\tau})^c\right)\in A_F\times \mathcal{U}\times A_F\times \mathcal{V}. $$
This means that $(U_\sigma\cap U_{\tau})^c\in \mathcal{V}$ and $I\in A_F$.
We can also conclude that $I=I_x$ or $I=J_x$ for some $x\in \omega^{\omega}$ and so $J_x\subseteq I_x$.

Case 3.1: $\bigcup_{\rho\in \ua_{\prec_{\overline{T}}}\sigma\cap \ua_{\prec_{\overline{T}}}\tau}U_\rho=\varnothing$. Then $\varnothing\in \mathcal{U}$. By the Scott openness of $\mathcal{U}$, it follows that $\mathcal{U}=\mathcal{O}(2^{\Omega})$. We claim that $f^{-1}(\diamondsuit P_n^{-1}(A_F\times \mathcal{U}\times A_F\times \mathcal{V}))=\mbox{TR}$.

Note that $I=I_x$ or $I=J_x$ for some $x\in \omega^{\omega}$ and $J_x\subseteq I_x$. Then, for each $T\in \mbox{TR}$, we have $I_x\in \mbox{RI}(\prec_T)$ and
$P_n(\eta(I_x))=(I_x,U_n^T,I_x,A_n^T)=(I_x,\bigcup_{\rho\in \ua_{\prec_{T}}\sigma\cap \ua_{\prec_{T}}\tau}U_\rho, I_x, (U_\sigma\cap U_{\tau})^c)\in A_F\times \mathcal{O}(2^{\Omega})\times A_F\times \mathcal{V}=A_F\times \mathcal{U}\times A_F\times \mathcal{V}$.
Therefore, $f^{-1}(\diamondsuit P_n^{-1}(A_F\times \mathcal{U}\times A_F\times \mathcal{V}))=\mbox{TR}$.

Case 3.2: $\bigcup_{\rho\in \ua_{\prec_{\overline{T}}}\sigma\cap \ua_{\prec_{\overline{T}}}\tau}U_\rho\neq \varnothing$. Then $\ua_{\prec_{\overline{T}}}\sigma\cap \ua_{\prec_{\overline{T}}}\tau\neq \varnothing$. This means that $\sigma$ and $\tau$ have a common upper bound under $\prec_{\overline{T}}$. By inspecting the definition of $\prec_{\overline{T}}$, we can conclude that $\tau\prec_{\overline{T}}\sigma$, $\sigma\prec_{\overline{T}}\tau$, or $\sigma=\tau$. In what follows, we only consider the nontrivial case where $\varnothing\not\in \mathcal{U}$.

Note that for each $T\in \mbox{TR}$,
$$\begin{array}{rcl} & & (I_x,\bigcup_{\rho\in \ua_{\prec_{T}}\sigma\cap \ua_{\prec_{T}}\tau}U_\rho, I_x, (U_\sigma\cap U_{\tau})^c)\in A_F\times \mathcal{U}\times A_F\times \mathcal{V} \\
&\Longleftrightarrow & \bigcup_{\rho\in \ua_{\prec_{T}}\sigma\cap \ua_{\prec_{T}}\tau}U_\rho\in \mathcal{U}=\ua(\ua\{F_i:i\leq m\}) \\
&\Longleftrightarrow & \ua\{F_i:i\leq m\}\subseteq \bigcup_{\rho\in \ua_{\prec_{T}}\sigma\cap \ua_{\prec_{T}}\tau}U_\rho \\
&\Longleftrightarrow &  \forall i\leq m, F_i\in \bigcup_{\rho\in \ua_{\prec_{T}}\sigma\cap \ua_{\prec_{T}}\tau}U_\rho \\
& \Longleftrightarrow & \forall i\leq m, \exists \rho_i\in \ua_{\prec_{T}}\sigma\cap \ua_{\prec_{T}}\tau, F_i\in U_{\rho_i} \\
&\Longleftrightarrow & \forall i\leq m, \exists \rho_i\in \ua_{\prec_{T}}\sigma\cap \ua_{\prec_{T}}\tau, \rho_i\in F_i
\end{array}
$$

Case 3.2.1: $\sup\{\sigma,\tau\}=s\in \omega^{\textless\omega}$. Then $\ua_{\prec_{T}}\sigma\cap \ua_{\prec_{T}}\tau=\{t,\underline{t}:t\in \omega^{\textless\omega}, s~\mathrm{is~a~strict~prefix~of}~t\}\cup \{\underline{s}\}$. 
We claim that $f^{-1}(\diamondsuit P_n^{-1}(A_F\times \mathcal{U}\times A_F\times \mathcal{V}))=\mbox{TR}$.

We have picked $\overline{T}\in f^{-1}(\diamondsuit P_n^{-1}(A_F\times \mathcal{U}\times A_F\times \mathcal{V}))$ and $I_x\in \mbox{RI}(\prec_{\overline{T}})$ such that $P_n(\eta(I_x))=(I_x,U_n^{\overline{T}},I_x,A_n^{\overline{T}})=(I_x,\bigcup_{\rho\in \ua_{\prec_{T}}\sigma\cap \ua_{\prec_{T}}\tau}U_\rho,I_x,(U_\sigma\cap U_{\tau})^c)\in A_F\times \mathcal{U}\times A_F\times \mathcal{V}$.

Because $s\in \omega^{\textless\omega}$, $s\prec_{\overline{T}} \rho_i$ must belong to the transitive closure of $\prec_{\overline{T}}^1,\prec_{\overline{T}}^2$, and $\prec_{\overline{T}}^3$. Note that, for any $T\in \mbox{TR}$, $\prec_T^j$ are the same as $\prec_{\overline{T}}^j$ for $j\in\{1,2,3\}$. This means that $\forall i\leq m, \exists \rho_i\in \ua_{\prec_{T}}\sigma\cap \ua_{\prec_{T}}\tau$ such that $\rho_i\in F_i$. Note that $m,\sigma,\tau, F_i$ are all fixed. Therefore, for each $T\in \mbox{TR}$,
$P_n(\eta(I_x))=(I_x,U_n^T,I_x,A_n^T)=(I_x,\bigcup_{\rho\in \ua_{\prec_{T}}\sigma\cap \ua_{\prec_{T}}\tau}U_\rho, I_x, (U_\sigma\cap U_{\tau})^c)\in A_F\times \mathcal{U}\times A_F\times \mathcal{V}$. It follows that $\mbox{TR}\subseteq f^{-1}(\diamondsuit P_n^{-1}(A_F\times \mathcal{U}\times A_F\times \mathcal{V}))$. As a result, $f^{-1}(\diamondsuit P_n^{-1}(A_F\times \mathcal{U}\times A_F\times \mathcal{V}))=\mbox{TR}$.

Case 3.2.2: $\sup\{\sigma,\tau\}=\underline{s}$, where $s\in \omega^{\textless\omega}$. 
Then 
$$\ua_{\prec_{T}}\sigma\cap \ua_{\prec_{T}}\tau=\{\underline{t}:t\in \omega^{\textless\omega}, s~\mathrm{is~a~strict~prefix~of}~t\}\cup \{u\in \omega^{\textless\omega}:\exists v\in B,~ v~\mathrm{is~a~prefix~of}~u\},$$ where $B=\{v\in \omega^{\textless\omega}: v\not\in T, s~\mathrm{is~a~strict~prefix~of}~v\}$.

Case 3.2.2.1: For all $i\leq m$, $F_i\cap \{\underline{t}:s~\mathrm{is~a~strict~prefix~of}~t\}\neq \varnothing$. Then we choose $\underline{t_i}\in F_i$ such that $s$ is a strict prefix of $t_i$ for each $i\leq m$. Thus $\underline{s}\prec_T \underline{t_i}$ for each $T\in \mbox{TR}$. This means that $\forall i\leq m, \exists \underline{t_i}\in \ua_{\prec_{T}}\sigma\cap \ua_{\prec_{T}}\tau$ such that $\underline{t_i}\in F_i$ for each $T\in \mbox{TR}$. Therefore, for each $T\in \mbox{TR}$, we find $I_x\in \mbox{RI}(\prec_T)$ such that $P_n(\eta(I_x))=(I_x,U_n^T,I_x,A_n^T)=(I_x,\bigcup_{\rho\in \ua_{\prec_{T}}\sigma\cap \ua_{\prec_{T}}\tau}U_\rho, I_x, (U_\sigma\cap U_{\tau})^c)\in A_F\times \mathcal{U}\times A_F\times \mathcal{V}$. Consequently, $f^{-1}(\diamondsuit P_n^{-1}(A_F\times \mathcal{U}\times A_F\times \mathcal{V}))=\mbox{TR}$.

Case 3.2.2.2: There exists $i\leq m$ such that $F_i\cap\{\underline{t}:s~\mathrm{is~a~strict~prefix~of}~t\}= \varnothing$. Let $J=\big\{i\leq m: F_i\cap \{\underline{t}:s~\mathrm{is~a~strict~prefix~of}~t\}= \varnothing\big\}$. We claim that 
$$f^{-1}(\diamondsuit P_n^{-1}(A_F\times \mathcal{U}\times A_F\times \mathcal{V}))=\bigcap_{i\in J}\bigcup_{v\in A_i}U_v^c,$$ where $A_i=\{v\in B:\exists u\in F_i, v~\mathrm{is~a~prefix~of}~u\}$ for each $i\in J$.

In fact, for each $i\in J$, $F_i\cap \{\underline{t}:s~\mathrm{is~a~strict~prefix~of}~t\}= \varnothing$ by the definition of $J$. It follows that $F_i\cap \{u\in \omega^{\textless\omega}:\exists v\in B,~ v~\mathrm{is~a~prefix~of}~u\}\neq \varnothing$ for each $i\in J$. This yields that there exist $u\in \omega^{\textless\omega}\cap F_i$ and $v\in B$ such that $v$ is a prefix of $u$. We can observe that $v\in A_i$ and $T\in U_v^c$ by the construction of $B$. Hence, $f^{-1}(\diamondsuit P_n^{-1}(A_F\times \mathcal{U}\times A_F\times \mathcal{V}))\subseteq\bigcap_{i\in J}\bigcup_{v\in A_i}U_v^c$.

Conversely, let $T\in \bigcap_{i\in J}\bigcup_{v\in A_i}U_v^c$. Then, for each $i\in J$, there exists $v_i\in A_i$ such that $v_i\not\in T$. Since $v_i\in A_i$, there exists $u_i\in F_i$ such that $v_i~\mathrm{is~a~prefix~of}~u_i$, and $v_i\in B$. By the construction of $B$, we have $v_i\in \ua_{\prec_{T}}\sigma\cap \ua_{\prec_{T}}\tau$. This implies that $u_i\in \ua_{\prec_{T}}\sigma\cap \ua_{\prec_{T}}\tau\cap F_i$. Therefore, for all $i\in J$, $\exists u_i\in \ua_{\prec_{T}}\sigma\cap \ua_{\prec_{T}}\tau, u_i\in F_i$.

For each $i\not\in J$, we have $F_i\cap \{\underline{t}:s~\mathrm{is~a~strict~prefix~of}~t\}\neq \varnothing$. By a similar argument to that in Case 3.2.2.1, we obtain that $\forall i\not\in J, \exists \rho_i\in \ua_{\prec_{T}}\sigma\cap \ua_{\prec_{T}}\tau, \rho_i\in F_i$ for each $T\in \mbox{TR}$. In conclusion, $\forall i\leq m, \exists \rho_i\in \ua_{\prec_{T}}\sigma\cap \ua_{\prec_{T}}\tau, \rho_i\in F_i$ for each $T\in \bigcap_{i\in J}\bigcup_{v\in A_i}U_v^c$. This entails that $\bigcap_{i\in J}\bigcup_{v\in A_i}U_v^c\subseteq f^{-1}(\diamondsuit P_n^{-1}(A_F\times \mathcal{U}\times A_F\times \mathcal{V}))$.

Thus, $f^{-1}(\diamondsuit P_n^{-1}(A_F\times \mathcal{U}\times A_F\times \mathcal{V}))=\bigcap_{i\in J}\bigcup_{v\in A_i}U_v^c$, which is Borel.

We conclude that $f$ is Borel. This completes the proof of Theorem~\ref{Pi}.

\section*{Acknowledgements}
{We thank Matthew de Brecht for useful discussions on the topic of this paper. This work is supported by the National Natural Science Foundation of China (No. 12231007, No. 12271263 and No. 12401596)}.

\bibliographystyle{plain}
\bibliography{model}

\end{document}